\newtheorem{theorem}{Theorem}
\newtheorem{corollary}[theorem]{Corollary}
\newtheorem{definition}[theorem]{Definition}
\newtheorem{example}[theorem]{Example}
\newtheorem{lemma}[theorem]{Lemma}
\newtheorem{proposition}[theorem]{Proposition}
\newtheorem{remark}[theorem]{Remark}
\newcommand{\Frac}[2]{\displaystyle\frac{#1}{#2}}
\newcommand{\Sum}{\displaystyle\sum}%_{#1}^{#2}}
\newcommand{\Lim}{\displaystyle\lim}%_{#1}^{#2}}
\begin{document}
\title[On the real projections of zeros of APF]{On the real projections of zeros of almost periodic functions} % in their strip of almost periodicity}

\author{J.M. Sepulcre}
\address{Department of Mathematics\\ University of
Alicante, 03080-Alicante\\
Spain} \email{JM.Sepulcre@ua.es}

\author{T. Vidal}
\address{University of
Alicante, 03080-Alicante\\
Spain} \email{tmvg@alu.ua.es}

\subjclass[2010]{Primary: 30B50, 30D20, 30Axx, 11J72}

%\author{T. Vidal}
%\email{tmvg@alu.ua.es}
%\address{Department of Mathematical Analysis, University of
%Alicante, 03080-Alicante, Spain}

\keywords{Almost periodic functions; Fourier exponents; Zeros of analytic functions; Dirichlet series}
%\thanks{The research was partially supported by Generalitat Valenciana under project GV/2015/035.}

%\maketitle

\begin{abstract}
This paper deals with the set of the real projections of the zeros of an arbitrary almost periodic function defined in a vertical strip $U$. It provides practical results in order to determine whether a real number belongs to the closure of such a set. Its main result shows that, in the case that the Fourier exponents $\{\lambda_1,\lambda_2,\lambda_3,\ldots\}$  of an almost periodic function are linearly independent over the rational numbers, such a set has no isolated points in $U$.

\end{abstract}

\maketitle

\section{Introduction}

The theory of almost periodic functions, which was created and developed in its main features by H. Bohr during the $1920$'s, opened a way to study a wide class of trigonometric series of the general type and even exponential series. %(see for example \cite{Besi,Bohr,Bohr2,Corduneanu1,Corduneanu,Jessen}).
This theory shortly acquired numerous applications to various areas of mathematics, from harmonic analysis to differential equations. In the case of the functions that are defined on the real numbers, the notion of almost periodicity is a generalization of purely periodic functions and, in fact, as in classical Fourier analysis, any almost periodic function is associated with a Fourier series with real frequencies. %which also arises naturally in the study of diffraction patterns of aperiodic crystals \cite[Chapter 3]{Crystal}.

%We will focus our attention on the almost periodic functions of a complex variable.
Let us briefly recall some notions concerning the theory of the almost periodic functions of a complex variable, which was theorized in \cite{BohrAnalytic} (see also \cite{Besi,Bohr,Corduneanu1,Jessen}).
%A function $f(s)=f(\sigma+it)$, analytic in a vertical strip $U=\{s=\sigma+it:\alpha<\sigma<\beta\}$ ($-\infty\leq\alpha<\beta\leq\infty$) is called almost periodic in $U$ if to any
% $\varepsilon>0$ there exists a relatively dense set $\tau=\tau (\varepsilon)$ of translation numbers satisfying
%$$|f(s+i\tau)-f(s)|<\varepsilon\ \forall s\in U.$$
A function $f(s)$, $s=\sigma+it$, analytic in a vertical strip $U=\{s=\sigma+it\in\mathbb{C}:\alpha<\sigma<\beta\}$ ($-\infty\leq\alpha<\beta\leq\infty$), is called almost periodic in $U$ if to any
 $\varepsilon>0$ there exists a number $l=l(\varepsilon)$ such that each interval $t_0<t<t_0+l$ of length $l$ contains a number $\tau$ satisfying
$$|f(s+i\tau)-f(s)|\leq \varepsilon\ \forall s\in U.$$
%This definition implies in particular that, for any fixed $\sigma\in(\alpha,\beta)$, the function $h_{\sigma}(t):=f(\sigma+it)$ is an almost periodic function of the real variable $t$.
We will denote as $AP(U,\mathbb{C})$ the space of almost-periodic functions in a vertical strip $U$.
It is known that any almost periodic function in $AP(U,\mathbb{C})$ is determined by an exponential series of the form $\sum_{n\geq 1} a_ne^{\lambda_n s}$ with complex coefficients $a_n$ and real exponents $\lambda_n$, called Fourier exponents of $f$. This associated series is called the Dirichlet series of the given analytic almost periodic function (see \cite[p.147]{Besi}, \cite[p.77]{Corduneanu1} or \cite[p.312]{Jessen}). % determines the function uniquely. %and, for any fixed $\sigma\in (\alpha,\beta)$, gives the Fourier series of $F_{\sigma}(t)$.

Moreover, the set of almost periodic functions in a vertical strip $U$ coincides with the set of the functions which can be approximated uniformly in every reduced strip by exponential polynomials of the form
\begin{equation}\label{exppoly}
a_1e^{\lambda_1 s}+\ldots+a_ne^{\lambda_{n}s},\
a_j\in\mathbb{C},\ n\geq 2
\end{equation}
where $\{\lambda_1,\lambda_2,\ldots,\lambda_{n}\}$ is an ordered set of real numbers (see for example \cite[Theorem 3.18]{Corduneanu1}). In fact, it is convenient to recall that, even in the case that the sequence of the partial sums of its Dirichlet series does not converge uniformly, there exists a sequence of exponential polynomials, called Bochner-Fej\'{e}r polynomials, of the type $P_k(s)=\sum_{j\geq 1}p_{j,k}a_je^{\lambda_js}$ where for each $k$ only a finite number of the factors $p_{j,k}$ differ from zero, which converges uniformly to $f$ in every reduced strip in $U$  and converges formally to the Dirichlet series on $U$ \cite[Polynomial approximation theorem, pgs. 50,148]{Besi}.

The study of the zeros of the class of exponential polynomials of type (\ref{exppoly}) has become a topic
of increasing interest, see for example \cite{Avellar,Borwein,Farag,Gaps,Moreno,PT,JMT,SV3,SV,SV1}. %--citar
In this paper, we will study certain properties on the zeros of an almost periodic function $f(s)$
in its vertical strip of almost periodicity $U=\{s=\sigma+it:\alpha<\sigma<\beta\}$. Specifically, consider the values $a_{f}$ and $%
b_{f}$ defined as
\begin{equation}\label{an}
a_{f}:= \inf \left\{ \operatorname{Re}s:f(s)=0,\ s\in U\right\}
%\tag{1.2}
\end{equation}
and
\begin{equation}\label{bn}
\ b_{f}:= \sup \left\{ \operatorname{Re}s:f(s)=0,\ s\in U\right\}.
%\tag{1.3}
\end{equation}
In general, if $f$ has at least one zero in $U$ it is satisfied $-\infty\leq a_f\leq b_f\leq\infty$ (it also depends on $U$). %In this sense, from now on we will suppose that $R_f\neq\emptyset$ (otherwise)
Given such a function $f(s)$, if $a_f$ and $b_f$ are real numbers, the bounds $a_f$ and $b_f$ allow us to define an interval $I_{f}:= \left[ a_{f},b_{f}%
\right]$  which contains the closure of
the set of the real parts of the zeros of $f(s)$ in $U$.
If either $a_f=\infty$ or $b_f=\infty$, the interval $I_f$ is of the form $(-\infty,b_f]$, $[a_f,\infty)$ or $(-\infty,\infty)$.
In this paper, we will focus our attention on the set
\begin{equation}\label{1.1}
R_{f}:= \overline{\left\{ \operatorname{Re}s:f(s)=0,\ s\in U\right\}}\cap (\alpha,\beta).
%\tag{1.4}
\end{equation}%

In this respect, the density properties of the zeros of several groups of exponential polynomials have also become a topic of increasing interest.
%Furthermore, the results on the nature of $R_P$ have immediate implication to the theory of stability for neutral time delay systems.
In particular, the topological properties of the set $R_{\zeta_n}=\overline{\left\{
\operatorname{Re}s:\zeta_n(s)=0\right\}}$ associated with the partial sums $\zeta_n(s)=1+2^{-s}+\ldots+n^{-s}$, $n\geq 2$, of the Riemann zeta function has been studied from different approaches. For example, an auxiliary function
associated with $\zeta_n(s)$ was used in \cite[Theorem 9]{JMT} in order to
establish conditions to
decide whether a real number is in the set $R_{\zeta_n}$. This auxiliary function,
which is called in \cite[p. 163]{Spira2} the ``companion function"\ of $\zeta_n$,
can also be adapted from a known result of C.E. Avellar and J.K. Hale
\cite[Theorem 3.1]{Avellar} in order to obtain analytical criterions about $R_P$ in the more general case of
exponential polynomials $P(s)$ of type (\ref{exppoly}). %whose frequencies are not necessarily linearly independent over the rational numbers.

In this paper, by analogy to the case of exponential polynomials,
%by following the ideas of \cite{JMT} and based on inequalities (\ref{PG}),
we first introduce an auxiliary function, of countably many real variables, which is associated with a prefixed almost periodic function $f(s)$ in a vertical strip $U$ (see section \ref{sectionaux}).
Secondly, this auxiliary function leads us to a practical characterization of the points of the set $R_{f}$
associated with $f(s)$ (see Theorem \ref{th5}). See also Theorem \ref{lemma3} which provides another characterization of the points in $R_f$ and extends other results such as \cite[Lemma 3]{RACSAM} or \cite[Lemma 4]{Sep}.
Thirdly, we study the closure set of the real parts of the zeros of almost periodic functions $f(s)$ whose Fourier exponents $\{\lambda_1,\lambda_2,\ldots,\lambda_k,\ldots\}$
are linearly independent over the rational numbers (see section \ref{sectionqli}). Under these hypothesis,
this study provides a new pointwise characterization of the set $R_f$ in terms of the inequalities (\ref{PGp})
(see Theorem \ref{point}), which facilitates the obtaining of Proposition \ref{lem2}, about the boundary points of $R_f$, and
corollaries \ref{cor8} and \ref{cor9} about some extra conditions under which we can state that $R_f\neq\emptyset$
(concerning this topic, see also Example \ref{exampler}).
Finally, also under $\mathbb{Q}$-linear independence of $\{\lambda_1,\lambda_2,\ldots\lambda_k,\ldots\}$, with $k>2$,
we prove that the set of the real projections of the zeros of $f(s)$
has no isolated point in $U$ (see Theorem \ref{theorem}), which generalizes \cite[Theorem 7]{Gaps}.

\section{An auxiliary function associated with an almost periodic function}\label{sectionaux}

Let $\mathcal{S}_{\Lambda}$ denote the class consisting of exponential sums of the form
 \begin{equation*}\label{eqqnew}
\sum_{j\geq 1}a_je^{\lambda_jp},\ a_j\in\mathbb{C},\ \lambda_j\in\Lambda,
\end{equation*}
where $\Lambda=\{\lambda_1,\lambda_2,\ldots,\lambda_j,\ldots\}$ is an arbitrary countable set of distinct real numbers (not necessarily unbounded), which are called a set of exponents or frequencies.

Also, let
$G_{\Lambda}=\{g_1, g_2,\ldots, g_k,\ldots\}$ be a basis of the
vector space over the rationals generated by a set $\Lambda$ of exponents, %group $W=\mathbb{Z}w_1+\mathbb{Z}w_2+\ldots+\mathbb{Z}w_j+\ldots$,
which implies that $G_{\Lambda}$ is linearly independent over the rational numbers and each $\lambda_j$ is expressible as a finite linear combination of terms of $G_{\Lambda}$, say
$$\lambda_j=\sum_{k=1}^{q_j}r_{j,k}g_k,\ \mbox{for some }r_{j,k}\in\mathbb{Q}.$$
By abuse of notation, we will say that $G_{\Lambda}$ is a basis for $\Lambda$. Moreover, we will say that $G_{\Lambda}$ is an integral basis for $\Lambda$ when $r_{j,k}\in\mathbb{Z}$ for any $j,k$. Finally, we will say that $G_{\Lambda}$ is the \textit{natural basis} for $\Lambda$, and we will denote it as $G_{\Lambda}^*$, when it is constituted by elements in $\Lambda$ as follows. Firstly if $\lambda_1\neq 0$ then $g_1:=\lambda_1\in G_{\Lambda}^*$. Secondly, if $\{\lambda_1,\lambda_2\}$ are $\mathbb{Q}$-rationally independent, then $g_2:=\lambda_2\in G_{\Lambda}^*$. Otherwise, if $\{\lambda_1,\lambda_3\}$ are $\mathbb{Q}$-rationally independent, then $g_2:=\lambda_3\in G_{\Lambda}^*$, and so on.

When the formal series in $\mathcal{S}_{\Lambda}$ are handled as exponential sums of a complex variable on which
we fix a summation procedure, we will introduce an auxiliary function which will be an important tool in this paper.
To do this, we first consider the definition of the classes $\mathcal{D}_{\Lambda}$ of almost periodic functions in the following terms.

\begin{definition}\label{DF}
Let $\Lambda=\{\lambda_1,\lambda_2,\ldots,\lambda_j,\ldots\}$ be an arbitrary countable set of distinct real numbers. We will say that a function $f:U\subset\mathbb{C}\to\mathbb{C}$ %(resp. $f:\mathbb{R}\to\mathbb{C}$)
is in the class $\mathcal{D}_{\Lambda}$ if it is an almost periodic function in $AP(U,\mathbb{C})$ %(resp. in $AP(\mathbb{R},\mathbb{C})$)
whose associated Dirichlet series %(resp. Fourier series)
is of the form
 \begin{equation}\label{eqqo}
\sum_{j\geq 1}a_je^{\lambda_js},\ a_j\in\mathbb{C},\ \lambda_j\in\Lambda,
\end{equation}
% \begin{equation}\label{eqq00o}
%\mbox{(resp. }\sum_{j\geq 1}a_je^{i\lambda_jt},\ a_j\in\mathbb{C}\setminus\{0\},\ \lambda_j\in\Lambda.\mbox{)}
%\end{equation}
where $U$ is a strip of the type $\{s\in\mathbb{C}: \alpha<\operatorname{Re}s<\beta\}$, with $-\infty\leq\alpha<\beta\leq\infty$.
%$\alpha$ and $\beta$ that could be finite or infinite.
%Also, we will say that an exponential sum is in the class $\mathcal{S}_{\Lambda}$ if it is a formal sum of the type...
\end{definition}

Now, with respect to our particular case of almost periodic functions with the Bochner-Fej\'{e}r summation method (see, in this regard, \cite[Chapter 1, Section 9]{Besi}),  to every almost periodic function $f\in \mathcal{D}_{\Lambda}$ we can associate an auxiliary function $F_f$ of countably many real variables as follows (see \cite{SV2}).
%from the associated Dirichlet series. In other words, by abuse of language, we can use Definition \ref{auxuliaryfunc0} for $f$, but applied to its Dirichlet series. However, if $f\in\mathcal{U}_{\Lambda}$, we can directly link to it an auxiliary function $F_f$ of countably many real variables as follows.
For this, let $2\pi \mathbb{Z}^{m}=\{(c_1,c_2,\ldots,c_m)\in\mathbb{R}^{m}:c_k=2\pi n_k, \mbox{with }n_k\in\mathbb{Z},\ k=1,2,\ldots,m\}$.

\begin{definition}\label{auxuliaryfunc}
Given $\Lambda=\{\lambda_1,\lambda_2,\ldots,\lambda_j,\ldots\}$ a set of exponents, let $f(s)\in\mathcal{D}_{\Lambda}$ be an almost periodic function in $\{s\in\mathbb{C}:\alpha<\operatorname{Re}s<\beta\}$, $-\infty\leq\alpha<\beta\leq\infty$, whose Dirichlet series is given by $\sum_{j\geq 1}a_je^{\lambda_js}$.
For each $j\geq 1$ let $\mathbf{r}_j$ be the vector of rational components satisfying the equality $\lambda_j=<\mathbf{r}_j,\mathbf{g}>=\sum_{k=1}^{q_j}r_{j,k}g_k$, where
$\mathbf{g}:=(g_1,\ldots,g_k,\ldots)$ is the vector of the elements of the natural basis $G_{\Lambda}^*$ for $\Lambda$.
Then we define the auxiliary function  $F_f: (\alpha,\beta)
\times
[0,2\pi)
^{\sharp G_{\Lambda}^*}\times \prod_{j\geq 1}2\pi\mathbb{Z}^{\sharp G_{\Lambda}^*}\rightarrow
\mathbb{C}$
 associated with $f$, relative to the basis $G_{\Lambda}^*$, as
\begin{equation}\label{2.4}
F_{f}(\sigma,\mathbf{x},\mathbf{p}_1,\mathbf{p}_2,\ldots):=\sum_{j\geq1}a_j e^{\lambda_j\sigma
}e^{<\mathbf{r}_j,\mathbf{x}+\mathbf{p}_j>i}\text{, }
\end{equation}%
where $\sigma \in
%TCIMACRO{\U{211d} }%
%BeginExpansion
(\alpha,\beta)
%EndExpansion
\text{, }\mathbf{x}\in%
[0,2\pi)^{\sharp G_{\Lambda}^*},\ \mathbf{p}_j\in2\pi\mathbb{Z}^{\sharp G_{\Lambda}^*}$ and series (\ref{2.4}) is summed by Bochner-Fej\'{e}r procedure, applied at $t=0$ to the sum
$\sum_{j\geq1}a_j e^{<\mathbf{r}_j,\mathbf{x}+\mathbf{p}_j>i}e^{\lambda_js}$.
\end{definition}

%If $f\in AP(U,\mathbb{C})$, it was proved in \cite[Lemma 3]{SV} that any function of its equivalence class
%is also included in
%$AP(U,\mathbb{C})$. Then
We first note that, if $\sum_{j\geq1}a_j e^{\lambda_js}$ is the Dirichlet series of $f\in AP(U,\mathbb{C})$,
for every choice of $\mathbf{x}\in\mathbb{R}^{\sharp G_{\Lambda}}$ and $\mathbf{p}_j\in 2\pi\mathbb{Z}^{\sharp G_{\Lambda}}$, $j=1,2,\ldots$, the sum
$\sum_{j\geq1}a_j e^{<\mathbf{r}_j,\mathbf{x}+\mathbf{p}_j>i}e^{\lambda_js}$ represents the Dirichlet series of an
almost periodic function which is connected with $f$ through an equivalence relation (see \cite[Lemma 3]{SV}). %--arreglar poner f=sum a_j

We next introduce the following notation which will be used to show the direct relation between an almost periodic function and the auxiliary function associated with it.

\begin{definition}\label{image}
Given $\Lambda=\{\lambda_1,\lambda_2,\ldots,\lambda_j,\ldots\}$ a set of exponents, let $f(s)\in \mathcal{D}_{\Lambda}$ be an almost periodic function in an open vertical strip $U$, and $\sigma_0=\operatorname{Re}s_0$ with $s_0\in U$. We define $\operatorname{Img}\left(F_f(\sigma_0,\mathbf{x},\mathbf{p}_1,\mathbf{p}_2,\ldots)\right)$ to be the set of values in the complex plane taken on by the auxiliary function $F_f(\sigma,\mathbf{x},\mathbf{p}_1,\mathbf{p}_2,\ldots)$ when $\sigma=\sigma_0$; that is
$\operatorname{Img}\left(F_f(\sigma_0,\mathbf{x},\mathbf{p}_1,\mathbf{p}_2,\ldots)\right)=\{s\in\mathbb{C}:\exists \mathbf{x}\in[0,2\pi)^{\sharp G_{\Lambda}^*}\ \mbox{and }\mathbf{p}_j\in 2\pi\mathbb{Z}^{\sharp G_{\Lambda}^*}\mbox{ such that}\ s=F_f(\sigma_0,\mathbf{x},\mathbf{p}_1,\mathbf{p}_2,\ldots)\}.$
\end{definition}

The notation $\operatorname{Img}\left(F_f(\sigma_0,\mathbf{x},\mathbf{p}_1,\mathbf{p}_2,\ldots)\right)$ is well-posed because this set is independent of the basis $G_{\Lambda}$ (see \cite{SV2}). %--revisar numeración

Also, given a function $f(s)$, take the notation $$\operatorname{Img}\left(f(\sigma_0+it)\right)=\{s\in\mathbb{C}:\exists t\in\mathbb{R}\mbox{ such that }s=f(\sigma_0+it)\}.$$

%\begin{remark}\label{remark4}
%Consider $f\in AP(U,\mathbb{C})$ for some vertical strip $U=\{s\in\mathbb{C}:\alpha<\operatorname{Re}s<\beta\}$, whose Dirichlet series is given by $\sum_{j\geq 1}a_je^{\lambda_js}$. Note that any $w_0\in\operatorname{Img}\left(f(\sigma_0+it)\right)$, with $\sigma_0\in (\alpha,\beta)$, can be obtained as uniform limit of the Bochner-Fej\'{e}r polynomials which converge to $f(s)$ on every reduced strip in $U$ and formally to its associated Dirichlet series. Hence $w_0$ can also be written as $\sum_{j\geq 1}a_je^{\lambda_j(\sigma_0+it_0)}$ for some $t_0\in\mathbb{R}$, and vice versa. In fact, we recall that if the Dirichlet series is uniformly convergent on a vertical strip $U$, then it coincides with $f(s)$.
%\end{remark}

%Given $\Lambda$ a set of exponents, let $f(s)\in \mathcal{D}_{\Lambda}$ be an almost periodic function in an open vertical strip $U=\{s=\sigma+it\in\mathbb{C}:\alpha<\sigma<\beta\}$, and $\sigma_0=\operatorname{Re}s_0$ with $s_0\in U$.
%In \cite[Proposition 2]{SV1} it was proved that
%\begin{equation}
%\operatorname{Img}\left(F_f(\sigma_0,\mathbf{x})\right)=\overline{\operatorname{Img}\left(f(\sigma_0+it)\right)}.
%\end{equation}
%In fact, \cite[Corollary 1]{SV1} shows that if $E$ is a compact set of real numbers included in $(\alpha,\beta)$, % the strip of absolute convergence $\{s=\sigma+it:\sigma_l<\sigma<\sigma_r\}$ of $f$.
%then
%$$\bigcup_{\sigma\in E}\operatorname{Img}\left(F_f(\sigma,\mathbf{x})\right)= \overline{\bigcup_{\sigma\in E}\operatorname{Img}\left(f(\sigma+it)\right)}.$$

On the other hand, by analogy with Bohr's theory for Dirichlet series, we established in \cite{SV}
an equi\-valence relation $\shortstack{$_{{\fontsize{6}{7}\selectfont *}}$\\$\sim$}$ on the classes $\mathcal{S}_{\Lambda}$ and other
general classes of functions such as those in $\mathcal{D}_{\Lambda}$. For example, concerning the classes $\mathcal{D}_{\Lambda}$, it was proved in
\cite[Proposition 2, ii)]{SV2} that
$\operatorname{Img}\left(F_f(\sigma_0,\mathbf{x},\mathbf{p}_1,\mathbf{p}_2,\ldots)\right)=\bigcup_{f_k\shortstack{$_{{\fontsize{6}{7}\selectfont *}}$\\$\sim$} f}\operatorname{Img}\left(f_k(\sigma_0+it)\right)$. Moreover, it was proved in
\cite[Theorem 1]{SV2} that if $E$ is an open set of real numbers included in $(\alpha,\beta)$,
then $$\bigcup_{\sigma\in E}\operatorname{Img}\left(f_1(\sigma+it)\right)=\bigcup_{\sigma\in E}\operatorname{Img}\left(f_2(\sigma+it)\right),$$
where $f_1,f_2\in \mathcal{D}_{\Lambda}$ are two equivalent almost periodic functions in $U$.
That is, the functions $f_1$ and $f_2$ take the same set of values on the region $\{s=\sigma+it\in\mathbb{C}:\sigma\in E\}$.

\section{The closure set of the real projections of the zeros in the general case}

This section is mainly devoted to show two point-wise characterizations of the sets $R_f$ defined in (\ref{1.1}) associated with an almost periodic function $f(s)$.

We first extend and improve some results in the style of \cite[Lemma 3]{RACSAM} or \cite[Lemma 4]{Sep}.

\begin{theorem}\label{lemma3}%\textbf{[De momento no lo he usado]}%<--es un si y solo si
Let $f(s)$ be an almost periodic function in an open vertical strip $U=\{\sigma+it\in\mathbb{C}:\alpha<\sigma<\beta\}$, and
$\sigma _{0}\in\left(\alpha,\beta\right)$.  Then $\sigma_{0}\in R_{f}$ if
and only if there exists a sequence
$\{t_{j}\}_{j=1,2,\ldots}$ of real numbers such that
\begin{equation}\label{unoooo}
\lim_{j\rightarrow \infty }f(\sigma _{0}+it_{j})=0\text{.}
\end{equation}%
\end{theorem}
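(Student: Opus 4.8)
The plan is to prove the two implications separately, using in both directions the basic fact that a function almost periodic in $U$ is bounded and uniformly continuous on every reduced (closed vertical) substrip, together with the normality property of almost periodic functions, i.e. the relative compactness of the family of vertical translates $s\mapsto f(s+i\tau)$ in the topology of uniform convergence on reduced strips.

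For the forward implication, suppose $\sigma_0\in R_f$. By definition of the closure there is a sequence of zeros $s_n=\sigma_n+i\tau_n\in U$ of $f$ with $\sigma_n\to\sigma_0$. Choose $\delta>0$ with $[\sigma_0-\delta,\sigma_0+\delta]\subset(\alpha,\beta)$; for $n$ large both $\sigma_n+i\tau_n$ and $\sigma_0+i\tau_n$ lie in the reduced strip $\{\sigma_0-\delta\le\operatorname{Re}s\le\sigma_0+\delta\}$, and their distance is $|\sigma_n-\sigma_0|\to 0$. Since $f$ is uniformly continuous on that reduced strip,
$$|f(\sigma_0+i\tau_n)-f(\sigma_n+i\tau_n)|\to 0,$$
and as $f(\sigma_n+i\tau_n)=0$ we conclude $f(\sigma_0+i\tau_n)\to 0$. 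Taking $t_j:=\tau_j$ gives the required sequence.

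For the converse, suppose $f(\sigma_0+it_j)\to 0$ for some real sequence $\{t_j\}$; we may assume $f\not\equiv 0$. Consider the vertical translates $g_j(s):=f(s+it_j)$. Because a reduced strip is invariant under vertical translation, each $g_j$ is again almost periodic with $\sup|g_j|=\sup|f|$ on every reduced strip; by normality some subsequence $g_{j_k}$ converges, uniformly on each reduced strip (hence uniformly on compact subsets of $U$), to an analytic function $g$. Passing to the limit in $g_{j_k}(\sigma_0)=f(\sigma_0+it_{j_k})\to 0$ yields $g(\sigma_0)=0$, while the invariance of the supremum forces $\sup|g|=\sup|f|>0$ on a reduced strip, so $g\not\equiv 0$. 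Fix a small closed disc $\bar D$ centred at $\sigma_0$, contained in $U$, on whose boundary $g$ does not vanish; by Hurwitz's theorem (equivalently Rouch\'e, using $g_{j_k}\to g$ uniformly on $\bar D$) each $g_{j_k}$ has, for $k$ large, a zero $s_k'\in D$. Then $s_k'+it_{j_k}$ is a zero of $f$ whose real part equals $\operatorname{Re}s_k'$, which lies within the radius of $D$ from $\sigma_0$. Since this can be arranged for a disc of arbitrarily small radius $\rho>0$, for each $\rho$ there is a zero of $f$ with real part within $\rho$ of $\sigma_0$; hence $\sigma_0\in\overline{\{\operatorname{Re}s:f(s)=0\}}$, and as $\sigma_0\in(\alpha,\beta)$ we obtain $\sigma_0\in R_f$.

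The forward direction is routine and only needs uniform continuity. The delicate direction is the converse, and its crux is two-fold: first, extracting a genuine limit of the translates $f(s+it_j)$ even when the $t_j$ are unbounded — this is exactly where almost periodicity is indispensable, since near-zeros on the line $\operatorname{Re}s=\sigma_0$ must \emph{recur} within a relatively compact family of translates; and second, verifying $g\not\equiv 0$ so that Hurwitz's theorem applies and the limiting zero at $\sigma_0$ can be pushed back to actual zeros of $f$ with controlled real part. I expect the nonvanishing of the limit $g$ (via vertical-translation invariance of the supremum on reduced strips) and the careful bookkeeping of real parts under the shift $s\mapsto s+it_{j_k}$ to be the points demanding the most care.
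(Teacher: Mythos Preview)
Your proof is correct, and in both directions it differs from the paper's.

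For the implication $\sigma_0\in R_f\Rightarrow$ existence of the sequence, the paper extracts a Montel subsequence of the translates $f_j(s)=f(s+it_j)$ with limit $h$ and then argues $h(\sigma_0)=0$; your one-line appeal to uniform continuity on a reduced strip (from boundedness of $f'$) is more direct and avoids normal families entirely.

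For the harder implication (sequence $\Rightarrow\sigma_0\in R_f$), the paper first uses almost periodicity to show that $|f|$ is bounded below by some $\delta>0$ at a relatively dense set of points on the line $\operatorname{Re}s=\sigma_0$, and then invokes an external result, Moreno's lemma \cite[p.~73]{Moreno}, to conclude that $f$ has zeros in every thin strip about $\sigma_0$. Your argument is self-contained: normality of the translates gives a limit $g$ with $g(\sigma_0)=0$ and $g\not\equiv0$, and Hurwitz then pushes the zero back to the $g_{j_k}$ and hence to $f$. In essence you are reproving what Moreno's lemma supplies, so the underlying mechanism is the same, but your version exposes it explicitly. One point worth flagging: your conclusion $\sup|g|=\sup|f|$ on a reduced strip requires the convergence $g_{j_k}\to g$ to be uniform on the whole (unbounded) reduced strip, not merely on compacta; this is the Bochner compactness for $AP(U,\mathbb{C})$ rather than Montel's theorem, and is indeed standard, but it is the one place where almost periodicity (as opposed to mere local boundedness) is genuinely used in your converse direction.
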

\begin{proof}
Note that $f(s)$, and its derivatives, are bounded on every reduced strip of $U$ \cite[pp.142-144]{Besi}. %its critical strip $\left\{\sigma +it\in\mathbb{C}:a_{f}\leq \sigma \leq b_{f}\right\}\subset U$.
Suppose first the existence of $\{t_{j}\}_{j=1,2,\ldots}\subset \mathbb{R}$ satisfying \eqref{unoooo}.  In order to apply \cite[Lemma, p.73]{Moreno}, we next prove that there exist positive numbers $\delta $ and $l$ such that on any
segment of length $l$ of the line $x=\sigma _{0}$ there is a point
$\sigma _{0}+iM$ such that $\left\vert f(\sigma
_{0}+iM)\right\vert \geq \delta $. Indeed, let $t_0$ be a real number such that
$|f(\sigma_0+it_0)|>0$ and take
$\delta=\Frac{|f(\sigma_0+it_0)|}{2}$. Then, since $f(s)$ is an
almost-periodic function in $U$, there exists a positive real number $l=l(\delta)$%
\ such that every interval of length $l$ on the imaginary axis
contains at
least one translation number $iT$, associated with $\delta $, satisfying $%
\left\vert f(s+iT)-f(s)\right\vert \leq \delta $ for all $s\in
%TCIMACRO{\U{2102} }%
%BeginExpansion
U
%EndExpansion
$. Thus, by taking $s=\sigma _{0}+it_0$, we have $\left\vert
f(\sigma _{0}+i(t_0+T))-f(\sigma_{0}+it_0)\right\vert \leq \delta
$ and, according to the choice of $\delta$, it follows that
$\left\vert f(\sigma _{0}+i(t_0+T))\right\vert \geq \delta $.
Therefore, $\left\vert f(\sigma _{0}+iM)\right\vert \geq \delta $,
with $M:=t_0+T$. Consequently, the function $f(s)$ has the properties needed to
apply \cite[Lemma, p.73]{Moreno} and thus $f(s)$ has zeros
in any strip $$S_{\epsilon }:=\left\{ s\in\mathbb{C}:\sigma _{0}-\epsilon
<\operatorname{Re}s<\sigma _{0}+\epsilon \right\},$$ for any arbitrary
$\epsilon >0$, which proves that $\sigma _{0}\in R_{f}$.

Conversely suppose that $\sigma_{0}\in R_{f}$. This means that there exists a sequence $\{s_j\}_{j\geq 1}\subset U$, with $s_j=\sigma_j+it_j$, such that $f(s_j)=0$ and $\lim_{j\to\infty}\sigma_j=\sigma_0$. Consider the sequence of functions given by
$f_j(s):=f(s+t_j)$, $s\in U$, $j=1,2,\ldots$ which is uniformly bounded on every strip $S_{\epsilon}\subset U$.
By Montel's theorem \cite[Section 5.1.10]{Ash}, $\{f_j(s)\}_{j\geq 1}$ has a subsequence, which we denote again by $\{f_{j}(s)\}$, converging uniformly on compact subsets of $S_{\epsilon}$ to a function $h(s)$ analytic in $S_{\epsilon}$.
In this way, it is clear that $$h(\sigma_0)=\lim_{j\to\infty}f_j(\sigma_0)=\lim_{j\to\infty}f(\sigma_0+it_j)=0.$$
Indeed, note that $h(\sigma_0)=0$ in virtue from
$f_j(\sigma_j)=0$, $j=1,2,\ldots$, and $\lim_{j\to\infty}\sigma_j=\sigma_0$. %$h(\sigma_0)=\lim_{j\to\infty}h_j(\sigma_j)=0$.
\end{proof}

We next give a characterization of the sets $%
R_{f}$ by means of an \textit{ad hoc }version of \cite[Theorem
3.1]{Avellar}, which is obtained through the auxiliary function $F_f$ analysed in the previous section (see (\ref{2.4})).% whose proof is provided here for the sake of completeness.

\begin{theorem}\label{th5}
Let $f(s)$ be an almost periodic function in a vertical strip $U=\{s=\sigma+it:\alpha<\sigma<\beta\}$.
Consider $\sigma\in(\alpha,\beta)$.
Then $\sigma \in R_{f}$ if and only if there exist
some vectors $\mathbf{x}\in[0,2\pi)^{\sharp G_{\Lambda}^*}$ and $\mathbf{p}_j\in 2\pi\mathbb{Z}^{\sharp G_{\Lambda}^*}$ such that $F_{f}(\sigma ,\mathbf{x},\mathbf{p}_1,\mathbf{p}_2,\ldots)=0$, where $\Lambda$ is the
set of Fourier exponents of $f(s)$.
\end{theorem}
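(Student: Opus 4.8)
The plan is to reduce the statement to Theorem \ref{lemma3} together with the identity
\[
\operatorname{Img}\left(F_f(\sigma,\mathbf{x},\mathbf{p}_1,\mathbf{p}_2,\ldots)\right)=\bigcup_{f_k\sim f}\operatorname{Img}\left(f_k(\sigma+it)\right)
\]
established in \cite[Proposition 2]{SV2}, where the union runs over all almost periodic functions $f_k$ equivalent to $f$. Under this identity, asking for vectors $\mathbf{x}\in[0,2\pi)^{\sharp G_{\Lambda}^*}$ and $\mathbf{p}_j\in 2\pi\mathbb{Z}^{\sharp G_{\Lambda}^*}$ with $F_f(\sigma,\mathbf{x},\mathbf{p}_1,\ldots)=0$ is exactly asking that some function in the equivalence class of $f$ vanish at a point of the line $\operatorname{Re}s=\sigma$. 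So the theorem becomes: $\sigma\in R_f$ if and only if the equivalence class of $f$ contains a function with a genuine zero whose real part equals $\sigma$. Throughout I would use Theorem \ref{lemma3} to replace membership $\sigma\in R_f$ by the existence of a sequence $\{t_j\}$ of reals with $f(\sigma+it_j)\to 0$.

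For the direct implication, suppose $\sigma\in R_f$, so by Theorem \ref{lemma3} there is $\{t_j\}$ with $f(\sigma+it_j)\to 0$. Here I would invoke Bochner's compactness characterization of almost periodicity: the vertical translates $\{f(s+it_j)\}_j$ form a relatively compact family for uniform convergence on every reduced strip, so some subsequence converges, uniformly on reduced strips, to an almost periodic function $g$. The crucial point is that such a limit of translates is equivalent to $f$ (this is the correspondence between the relation $\sim$ and the hull of $f$ underlying \cite{SV} and \cite{SV2}); concretely, the limiting phases $\lim_k e^{i\lambda_n t_{j_k}}$ define a character, i.e.\ a real vector $\mathbf{x}$ with $g$ having Dirichlet coefficients $a_n e^{i\langle\mathbf{r}_n,\mathbf{x}\rangle}$. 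Evaluating at the real point $s=\sigma$ and using the uniform convergence gives $g(\sigma)=\lim_k f(\sigma+it_{j_k})=0$, hence $0\in\operatorname{Img}(g(\sigma+it))$ with $g$ equivalent to $f$; by the displayed identity this produces the desired $\mathbf{x},\mathbf{p}_j$ with $F_f(\sigma,\mathbf{x},\mathbf{p}_1,\ldots)=0$.

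For the converse, suppose $F_f(\sigma,\mathbf{x},\mathbf{p}_1,\ldots)=0$ for some admissible vectors. By the same identity there exist an almost periodic $g$ equivalent to $f$ and a real $t_0$ with $g(\sigma+it_0)=0$. To carry this zero back to $f$ itself I would appeal to \cite[Theorem 1]{SV2}: for every open interval $E\subset(\alpha,\beta)$ the equivalent functions $f$ and $g$ attain exactly the same set of values on the region $\{s:\operatorname{Re}s\in E\}$. Applying this with $E=(\sigma-\varepsilon,\sigma+\varepsilon)$ shows that $0$ is a value of $f$ on that region, that is, $f$ has a zero with real part in $(\sigma-\varepsilon,\sigma+\varepsilon)$; letting $\varepsilon\to 0$ yields zeros of $f$ whose real parts tend to $\sigma$, so $\sigma\in R_f$.

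I expect the main obstacle to lie in the bridge between approximate and exact vanishing carried out through the equivalence relation. In the direct implication one must pass from a sequence along which $f(\sigma+it_j)$ merely tends to $0$ to an equivalent function with a true zero on $\operatorname{Re}s=\sigma$, and this is exactly where almost periodicity is indispensable: one needs Bochner's relative compactness of translates and, more delicately, the identification of the limit of translates as a member of the equivalence class (so that the limiting function belongs to $\mathcal{D}_\Lambda$ and is detected by $F_f$, whose image over all $\mathbf{x},\mathbf{p}_j$ is precisely the value-set attained on $\operatorname{Re}s=\sigma$ by the whole class). Once the identity of \cite{SV2} and the hull description are in place, the remaining steps — the uniform-convergence evaluation and the $\varepsilon\to 0$ limit — are routine.
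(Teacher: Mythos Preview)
Your proposal is correct and follows essentially the same approach as the paper: both directions hinge on \cite[Proposition 2, ii)]{SV2} and \cite[Theorem 1]{SV2}, with the forward implication obtained by passing to a uniform limit of vertical translates and recognizing that limit as a member of the equivalence class of $f$. The only cosmetic difference is that you route the forward direction through Theorem \ref{lemma3} (approximate zeros at the fixed abscissa $\sigma$, so $g(\sigma)=0$ follows directly from uniform convergence), whereas the paper works with exact zeros at nearby abscissae $\sigma_j\to\sigma$ and invokes Hurwitz's theorem to conclude $h(\sigma_0)=0$; the compactness and hull-membership of the limit are supplied in the paper by \cite[Propositions 3 and 4]{SV}, which is precisely the Bochner-type fact you appeal to.
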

\begin{proof}
Let $\sigma_0\in R_f$. Then there exists a sequence
$\{s_{j}\}_{j=1,2,\ldots}\subset U$, with $s_j=\sigma _{j}+it_{j}$, of zeros of $f(s)$
such that $\sigma_0
=\lim_{j\rightarrow \infty }\sigma _{j}$. Consider the sequence of functions $\{f_j(s)\}_{j\geq 1}$
defined as $f_j(s):=f(s+it_j)$, which are analytic in $U$. Then it is clear that each $f_j(s)$
is equivalent to $f(s)$ (see \cite[Lemma 1]{SV}), that is $f_j\ \shortstack{$_{{\fontsize{6}{7}\selectfont *}}$\\$\sim$}\  f$ where $\shortstack{$_{{\fontsize{6}{7}\selectfont *}}$\\$\sim$}$ is the equivalence relation considered in \cite{SV,SV2}.
Now, by taking into account \cite[Propositions 3 and 4]{SV}, we can extract a subsequence of $\{f_j(s)\}_{j\geq 1}$
which converges uniformly on every reduced strip of $U$ to a function $h(s)$ in the same equivalence class as $f$.
Moreover, we have $h(\sigma_0)=0$. Indeed $h(\sigma_0)=\lim_{j\to \infty}f_j(\sigma_0)=0$.
Otherwise, there would exist $D(\sigma_0,\varepsilon)$ such that $h(s)\neq 0$ $\forall s\in \overline{D}(\sigma_0,\varepsilon)$
and, by Hurwitz's theorem \cite[Section 5.1.3]{Ash}, there would exist $j_0\in\mathbb{N}$ such that
$f_j(s)\neq 0$ $\forall s\in \overline{D}(\sigma_0,\varepsilon)$ and each $j\geq j_0$,
which is a contradiction because $f_j(\sigma_j)=0$. Consequently, as $h(\sigma_0)=0$ and $\operatorname{Img}\left(F_f(\sigma_0,\mathbf{x},\mathbf{p}_1,\mathbf{p}_2,\ldots)\right)=\bigcup_{f_k \shortstack{$_{{\fontsize{6}{7}\selectfont *}}$\\$\sim$}f}\operatorname{Img}\left(f_k(\sigma_0+it)\right)$ \cite[Proposition 2, ii)]{SV2}, we have that there exist $\mathbf{x}\in[0,2\pi)^{\sharp G_{\Lambda}^*}$ and $\mathbf{p}_j\in 2\pi\mathbb{Z}^{\sharp G_{\Lambda}^*}$ such that $0=F_f(\sigma_0,\mathbf{x},\mathbf{p}_1,\mathbf{p}_2,\ldots)$.

Conversely, suppose that
$F_{f}(\sigma ,\mathbf{x},\mathbf{p}_1,\mathbf{p}_2,\ldots)=0$ for some real number $%
\sigma\in (\alpha,\beta)$ and some vectors $\mathbf{x}=(x_{1}, x_{2},\ldots,x_k,\ldots)\in[0,2\pi)^{\sharp G_{\Lambda}^*}$ and $\mathbf{p}_j\in 2\pi\mathbb{Z}^{\sharp G_{\Lambda}^*}$.
Again by \cite[Proposition 2, ii)]{SV2}, we have that
$\operatorname{Img}\left(F_f(\sigma,\mathbf{x})\right)=\bigcup_{f_k\shortstack{$_{{\fontsize{6}{7}\selectfont *}}$\\$\sim$} f}\operatorname{Img}\left(f_k(\sigma+it)\right)$. Hence there exists $f_k\ \shortstack{$_{{\fontsize{6}{7}\selectfont *}}$\\$\sim$}\ f$ such that $f_k(\sigma+it)=0$ for some real number $t$. Furthermore, by \cite[Theorem 1]{SV2}, for any $\varepsilon>0$ sufficiently small it is accomplished that the functions $f_k$ and $f$ take the same set of values on the region $\{s\in\mathbb{C}:\operatorname{Re}s\in (\sigma-\varepsilon,\sigma+\varepsilon)\}$. This means that $\sigma\in R_f$.
\end{proof}

\section{The closure set of the real projections of the zeros under $\mathbb{Q}$-linear independence of the Fourier exponents}\label{sectionqli}

We first recall that if the Fourier exponents of an almost-periodic function $f$ in
a vertical strip $\{s=\sigma+it:\alpha<\sigma<\beta\}$ are linearly independent over the rational numbers, then the Dirichlet series expansion of $f$ converges to $f$ itself and, in fact, it converges absolutely in $(\alpha,\beta)$ (\cite[Theorem 3.6]{Corduneanu} and \cite[p. 154]{Besi}). Moreover, in this case it is obvious that the set of Fourier exponents has an integral basis.

We next prove the following characterization of the points in the set $R_f$ associated with an almost periodic function $f$
whose Fourier exponents are $\mathbb{Q}$-linearly independent, that is, when its exponents are linearly independent over
the rational numbers.

\begin{theorem}\label{point}
Let $f(s)$ be an almost periodic function in a vertical strip $U=\{s=\sigma+it:\alpha<\sigma<\beta\}$ whose Dirichlet series is given by $\sum_{n\geq 1} a_ne^{\lambda_n s}$
with $\{\lambda_1,\lambda_2,\ldots,\lambda_k,\ldots\}$ $\mathbb{Q}$-linearly independent and $k>2$. Let $\sigma_0\in(\alpha,\beta)$. Then $\sigma_0 \in R_{f}$ if and only if
\begin{equation}\label{PGp}
\left\vert a_{j}\right\vert e^{\sigma_0 \lambda_{j}} \leq \sum_{i\geq1\text{, }%
i\neq j}\left\vert a_{i}\right\vert e^{\sigma_0 \lambda_{i}}\ %
\left( j=1,2,\ldots,k,\ldots\right).%\tag{3.4}
\end{equation}%
\end{theorem}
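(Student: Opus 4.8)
The plan is to deduce Theorem~\ref{point} from the characterization in Theorem~\ref{th5} by using the $\mathbb{Q}$-linear independence to make the auxiliary function completely explicit, thereby reducing the membership $\sigma_0\in R_f$ to a purely geometric ``polygon-closing'' problem. Since $\Lambda=\{\lambda_1,\lambda_2,\ldots\}$ is $\mathbb{Q}$-linearly independent, every $\lambda_j$ is nonzero and $\Lambda$ is its own natural basis, so $g_k=\lambda_k$ and each coordinate vector $\mathbf{r}_j$ is the $j$-th unit vector; hence $\langle \mathbf{r}_j,\mathbf{x}+\mathbf{p}_j\rangle = x_j+(\mathbf{p}_j)_j$ with $(\mathbf{p}_j)_j\in 2\pi\mathbb{Z}$, so the shift $\mathbf{p}_j$ is immaterial. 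Because $\mathbb{Q}$-independence forces the Dirichlet series to converge absolutely on $(\alpha,\beta)$, the Bochner--Fej\'er sum collapses to the ordinary series, which is absolutely and (in $\mathbf{x}$) uniformly convergent, and $F_f(\sigma_0,\mathbf{x},\mathbf{p}_1,\ldots)=\sum_{j\ge1}a_je^{\lambda_j\sigma_0}e^{ix_j}$. Writing $a_j=|a_j|e^{i\varphi_j}$, $c_j:=|a_j|e^{\lambda_j\sigma_0}\ge0$ and $\theta_j:=\varphi_j+x_j$, Theorem~\ref{th5} then says that $\sigma_0\in R_f$ if and only if there are real numbers $\theta_j$ with $\sum_{j\ge1}c_je^{i\theta_j}=0$, where $S:=\sum_{j\ge1}c_j<\infty$.

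For the forward implication this is immediate: if $\sum_j c_je^{i\theta_j}=0$, then for each fixed $j$ we have $c_je^{i\theta_j}=-\sum_{i\ne j}c_ie^{i\theta_i}$, and taking moduli gives $c_j\le\sum_{i\ne j}c_i$, which is exactly (\ref{PGp}). The whole content of the theorem is therefore the converse: assuming (\ref{PGp}), equivalently $c_j\le S/2$ for every $j$, I must produce phases $\theta_j$ closing the (possibly infinite) polygon with side lengths $c_j$. This closing step is the main obstacle.

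I would first settle the finite problem: for nonnegative $c_1,\ldots,c_n$ with $M:=\max_j c_j\le\tfrac12 S_n$, $S_n:=\sum_j c_j$, one can choose $\theta_1,\ldots,\theta_n$ with $\sum_j c_je^{i\theta_j}=0$; more precisely the set of attainable moduli $\{\,|\sum_j c_je^{i\theta_j}|\,\}$ is exactly the interval $[\max(0,2M-S_n),S_n]$, the endpoints coming from aligning all vectors, respectively opposing the dominant one against the rest, and every intermediate value being attained since the modulus is continuous on the connected torus. The closing statement ($0$ attained when $M\le S_n/2$) I would prove by induction on $n$, the cases $n\le2$ being direct. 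For the inductive step, after relabelling so that $c_1\ge\cdots\ge c_n$, the two smallest lengths $c_{n-1},c_n$ may be replaced by a single free-direction vector of any length $c'\in[c_{n-1}-c_n,c_{n-1}+c_n]$; a short computation shows one can choose such a $c'$ so that the reduced system $\{c_1,\ldots,c_{n-2},c'\}$ again satisfies the closing inequality, whence induction applies. Here the hypothesis $k>2$ guarantees at least three available lengths, which is precisely what makes room for this merge.

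Finally I would pass to the infinite case by a tail-alignment device rather than a limit. Absolute convergence gives $T_n:=\sum_{j\ge n}c_j\to0$ while $S_{n-1}:=\sum_{j<n}c_j\to S>0$, so for $n$ large we have $\max(0,2M_{n-1}-S_{n-1})\le T_n\le S_{n-1}$, where $M_{n-1}:=\max_{j<n}c_j\le S/2$; indeed $2M_{n-1}-S_{n-1}=(2M_{n-1}-S)+T_n\le T_n$. By the finite result, the attainable-modulus interval of $c_1,\ldots,c_{n-1}$ then contains $T_n$, so I can fix $\theta_1,\ldots,\theta_{n-1}$ with $z:=\sum_{j<n}c_je^{i\theta_j}$ of modulus exactly $T_n$, and align every tail term in the direction $-z/|z|$, so that $\sum_{j\ge n}c_je^{i\theta_j}=-z$. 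Then $\sum_{j\ge1}c_je^{i\theta_j}=z-z=0$, closing the infinite polygon exactly and giving $\sigma_0\in R_f$ through Theorem~\ref{th5}; the degenerate case $S=0$, in which all $c_j=0$, is vacuous.
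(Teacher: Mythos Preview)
Your proof is correct and follows essentially the same route as the paper: both reduce to Theorem~\ref{th5}, note that $\mathbb{Q}$-independence makes $\Lambda$ its own basis so that the auxiliary function is simply $\sum_j a_je^{\lambda_j\sigma_0}e^{ix_j}$ with freely chosen phases, obtain the forward implication by the triangle inequality, and for the converse collapse the absolutely convergent tail into a single short side and close the resulting finite polygon. The only difference is that the paper appeals to \cite[p.~71]{Moreno} for the finite polygon-closing step, whereas you supply a self-contained inductive argument and an explicit attainable-modulus interval; your tail-alignment is exactly the paper's device of giving every tail term the common phase $\theta_{n_0}$.
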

\begin{proof}
Without loss of generality, take $G_{\Lambda}^*=\{\lambda_1,\lambda_2,\ldots\}$ as the basis of the
vector space over the rationals generated by the set of Fourier exponents of $f$.
Suppose that $\sigma_0\in R_f$, then by Theorem \ref{th5} there exist some vectors $\mathbf{x}\in[0,2\pi)^{\sharp G_{\Lambda}^*}$ and $\mathbf{p}_j\in 2\pi\mathbb{Z}^{\sharp G_{\Lambda}^*}$ such that $F_{f}(\sigma ,\mathbf{x},\mathbf{p}_1,\mathbf{p}_2,\ldots)=0$ or, equivalently,
$\Sum_{n\geq1}a_ne^{\lambda_n\sigma_0}e^{x_ni}=0$ (by taking $\mathbf{g}=(\lambda_1,\lambda_2,\ldots)$ and hence $r_{n,k}=0$ if $k\neq n$ and $r_{n,n}=1$). Therefore,
$$a_je^{\lambda_j\sigma_0}e^{x_ji}=-\Sum_{k\geq1,k\neq j}a_ke^{\sigma_0 \lambda_k}e^{x_ki},\ j=1,2,\ldots$$
and, by taking the modulus, we get
$$|a_j|e^{\lambda_j\sigma_0}\leq\sum_{k\geq1,k\neq j}|a_k|e^{\sigma_0 \lambda_k},\ j=1,2,\ldots.$$

Conversely, suppose that the positive real numbers
$|a_j|e^{\sigma_0 \lambda_j}$, $j=1,2,\ldots$, satisfy inequalities
(\ref{PGp}). We recall that by \cite[Theorem 3.6]{Corduneanu} or \cite[p.154]{Besi} it is accomplished $\sum_{j\geq 1}|a_j|e^{\sigma_0 \lambda_j}<\infty$. Thus, given $\varepsilon>0$ there exists $n_0\in \mathbb{N}$ such that $\sum_{j\geq n_0}|a_j|e^{\sigma_0 \lambda_j}<\varepsilon$. Hence, for $\varepsilon>0$ sufficiently small
we can index the terms in decreasing order so that $m_1$ is
such that $|a_{m_1}|e^{\sigma_0 \lambda_{m_1}}:=\max\{|a_k|e^{\sigma_0
\lambda_k}:k=1,2,\ldots,n_0-1\}$, $m_2$ such that $|a_{m_2}|e^{\sigma_0
\lambda_{m_2}}:=\max\{|a_k|e^{\sigma_0 \lambda_k}:k=1,2,\ldots,n_0-1, k\neq
m_1\},$ etc. Therefore, by taking $r:=\sum_{j\geq n_0}|a_j|e^{\sigma_0 \lambda_j}$, there is at least one $n_0$-sided polygon whose
sides have the lengths $|a_{m_j}|e^{\sigma_0 \lambda_{m_j}}$, $j=1,2,\ldots,n_0-1$ and $r$ \cite[p.71]{Moreno}. That means that
there exist real numbers $\theta_1,\theta_2,\ldots,\theta_{n_0}$
satisfying
$$\Sum_{k=1}^{n_0-1}|a_{k}|e^{\sigma_0 \lambda_{k}}e^{i\theta_k}+re^{i\theta_{n_0}}=0.$$
Consequently, by taking $\mathbf{x}\in \mathbb{R}^{\sharp G_{\Lambda}}$ the vector given by $x_{k}=\theta_k-\operatorname{Arg}(a_{k})$ for
$k=1,\ldots,n_0-1$, and $x_{k}=\theta_{n_0}-\operatorname{Arg}(a_{k})$ for each $k\geq n_0$, where $\operatorname{Arg}(a_k)$ denotes the
principal argument of $a_k$, we have
$$F_{f}(\sigma_0,\mathbf{x},\mathbf{0},\mathbf{0},\ldots)=\sum_{k\geq1}a_ke^{\lambda_k\sigma_0
}e^{x_ki}=0.$$ %That is,
%$F_{f}(\sigma_0,\mathbf{x})$
%\textbf{la función de Avellar se hace tan peque\~{n}a como quieras, pero hay que demostrar que vale $0$} Se puede con el hecho de que la imagen de la función de Avellar es cerrada (existe una sucesión que converge a $0$).
Hence, from Theorem \ref{th5}, $\sigma_0\in R_f$.
\end{proof}

%Note that the proof of the result above includes a generalization of Euclid's theorem on the inequality of the triangle, which leads to geometrical principle (\ref{PGp}) that characterizes the sets $R_P$.

%\begin{remark}
%Alternatively, as one of the referees pointed out, the necessity in Theorem \ref{point} follows from the following fact: if (\ref{PGp}) is not true then there exists, by continuity, a number $\varepsilon>0$ such that the condition of type (\ref{PGp}) is still false for the
%interval $(\sigma_0-\varepsilon, \sigma_0+\varepsilon)$ and thus, by the triangle inequality, there are no zeros
%with real part in this interval, which means that $\sigma_0\notin R_P$.
%\end{remark}

%\section{The description of the set $R_f$}

From now on we will analyse some properties of the set $R_{f}$ defined by $$\overline{\left\{ \operatorname{Re}%
s:f(s)=0,\ s\in U\right\} }\cap (\alpha,\beta),$$ associated with an almost periodic function
$f(s)$ in a vertical strip $U=\{s=\sigma+it:\alpha<\sigma<\beta\}$ with rationally independent Fourier exponents. %In this way, we will give a description of the set $R_f$.
%\begin{equation}\label{4.1}
%\left\vert c_{j}\right\vert e^{w_{j}\sigma }=\sum_{k=1\text{,
%}k\neq j}^{n}\left\vert c_{k}\right\vert e^{w_{k}\sigma }\text{,
%}j=2,\ldots,n-1\text{.}  %\tag{4.1}
%\end{equation}
%More so, as the proof of the following lemma shows (compare with \cite[Theorem 3]{MJMT}), these
%equations above for the cases $j=1$ and $j=n$ provide us the
%extremes of the critical interval $I_P=[a_P,b_P]$, where $a_P$ and
%$b_P$ are defined in (\ref{an}) and (\ref{bn}) respectively.
%
%
%\begin{lemma}\label{lem}
%Let $f(z)$ be an almost periodic function in a vertical strip $U$ whose Dirichlet series is given by
%$\sum_{n\geq 1}a_ne^{\lambda_nz}$. Thus $\sup\{|a_n|:n\geq 1\}<\infty$.
%\end{lemma}
%\begin{proof}
%By reductio ad absurdum, suppose $\sup\{|a_n|:n\geq 1\}=\infty$. Thus it is plain that $\sum_{n\geq 1}|a_n|^2$ diverges, which is a contradiction in virtue of $f\in B^2(U,\mathbb{C})$. %--ver p. 148 Besicovitch
%\end{proof}
Given such a function $f(s)$, at
any boundary point of the set $R_{f}$, we next prove that, concerning inequalities (\ref{PGp}), the equality is attained in
only one of these inequalities.

\begin{proposition}\label{lem2}
Let $f(s)$ be an almost periodic function in a vertical strip $U=\{s=\sigma+it:\alpha<\sigma<\beta\}$ whose Fourier exponents $\{\lambda_1,\lambda_2,\ldots,\lambda_k,\ldots\}$, with $k>2$, are $\mathbb{Q}$-linearly independent. Let $\sigma_0\in (\alpha,\beta)$. If
$\sigma_{0}$ is a boundary point of $R_{f}$, then it satisfies all
the inequalities (\ref{PGp}) and only one of them is an
equality.
\end{proposition}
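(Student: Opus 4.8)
The plan is to translate the family of inequalities (\ref{PGp}) into a single scalar condition and then read off the boundary behaviour from continuity. Write $c_j(\sigma):=|a_j|e^{\sigma\lambda_j}$ (each $c_j>0$, since every Fourier coefficient $a_j\neq 0$) and $S(\sigma):=\sum_{j\geq 1}c_j(\sigma)$. The $j$-th inequality in (\ref{PGp}) reads $c_j(\sigma)\leq S(\sigma)-c_j(\sigma)$, i.e. $2c_j(\sigma)\leq S(\sigma)$, so by Theorem \ref{point} we have $\sigma\in R_f$ if and only if $\phi(\sigma):=S(\sigma)-2\sup_{j\geq 1}c_j(\sigma)\geq 0$. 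Thus $R_f=\{\sigma\in(\alpha,\beta):\phi(\sigma)\geq 0\}$, and the whole statement becomes a question about the sign structure of $\phi$ at a boundary point.

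Next I would establish that $\phi$ is continuous on $(\alpha,\beta)$. Since the exponents are $\mathbb{Q}$-linearly independent, the series defining $S$ converges absolutely in $(\alpha,\beta)$; on any compact subinterval $[\sigma_1,\sigma_2]$ one has $c_j(\sigma)\leq |a_j|(e^{\sigma_1\lambda_j}+e^{\sigma_2\lambda_j})$, whose sum is $S(\sigma_1)+S(\sigma_2)<\infty$, so the Weierstrass $M$-test gives uniform convergence and hence continuity of $S$. For the supremum, note that for fixed $\sigma$ the terms $c_j(\sigma)\to 0$, so $\sup_j c_j(\sigma)$ is attained at a finite index; moreover $0\leq \sup_j c_j(\sigma)-\max_{j\leq N}c_j(\sigma)\leq\sum_{j>N}c_j(\sigma)\to 0$ uniformly on compact sets, exhibiting $\sup_j c_j$ as a locally uniform limit of the continuous finite maxima $\max_{j\leq N}c_j$. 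Hence $\phi$ is continuous and $R_f$ is relatively closed in $(\alpha,\beta)$.

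With continuity in hand, the boundary analysis is short. If $\sigma_0\in(\alpha,\beta)$ is a boundary point of $R_f$, then $\sigma_0\in R_f$ by closedness, so $\sigma_0$ satisfies every inequality (\ref{PGp}); this is the first assertion. Moreover there is a sequence $\sigma_n\to\sigma_0$ with $\sigma_n\notin R_f$, i.e. $\phi(\sigma_n)<0$, whence $\phi(\sigma_0)\leq 0$ by continuity, while $\sigma_0\in R_f$ forces $\phi(\sigma_0)\geq 0$. Therefore $\phi(\sigma_0)=0$, and since the supremum defining $\phi$ is attained at some index $j_0$, the $j_0$-th inequality in (\ref{PGp}) is an equality: at least one equality holds.

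Finally I would rule out a second equality, and this is where the hypothesis $k>2$ enters. Suppose two distinct indices $j_1\neq j_2$ gave equalities, i.e. $2c_{j_1}(\sigma_0)=S(\sigma_0)=2c_{j_2}(\sigma_0)$, so $c_{j_1}(\sigma_0)=c_{j_2}(\sigma_0)=\tfrac{1}{2}S(\sigma_0)$. Because there are at least three exponents and all coefficients are nonzero, there is an index $i_0\notin\{j_1,j_2\}$ with $c_{i_0}(\sigma_0)>0$, and then $S(\sigma_0)\geq c_{j_1}(\sigma_0)+c_{j_2}(\sigma_0)+c_{i_0}(\sigma_0)=S(\sigma_0)+c_{i_0}(\sigma_0)>S(\sigma_0)$, a contradiction. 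Hence exactly one of the inequalities (\ref{PGp}) is an equality. The only genuinely technical step is the continuity of $\phi$ (the attainment and local uniformity of the supremum); the conceptual heart, that a third strictly positive term obstructs two simultaneous equalities, is precisely the role of the assumption $k>2$.
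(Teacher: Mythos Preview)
Your proof is correct and somewhat cleaner than the paper's. Both arguments share the same skeleton: closedness of $R_f$ gives $\sigma_0\in R_f$ (hence all inequalities (\ref{PGp}) hold), and the observation that two simultaneous equalities would force a third positive term to vanish handles the ``at most one'' part identically. The difference lies in the ``at least one equality'' step. The paper argues by contradiction: assuming all inequalities are strict at $\sigma_0$, it picks a nearby $\sigma_1\notin R_f$, isolates the single index $j_0$ whose inequality fails at $\sigma_1$, and uses the intermediate value theorem on that $j_0$-th gap function to locate a $\sigma_2$ between $\sigma_0$ and $\sigma_1$ where equality holds, whence $\sigma_2\in R_f$, contradicting a claimed one-sided interval in $R_f^c$. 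You instead compress the entire family of inequalities into the scalar $\phi(\sigma)=S(\sigma)-2\sup_j c_j(\sigma)$, prove its continuity carefully via the Weierstrass $M$-test and the uniform approximation of the supremum by finite maxima, and read off $\phi(\sigma_0)=0$ directly from $\phi(\sigma_n)<0$ along a sequence in $R_f^c$. Your route avoids the case split and the somewhat delicate claim in the paper that a full one-sided interval about $\sigma_0$ lies in $R_f^c$; the price is the explicit continuity argument for $\sup_j c_j$, which you handle correctly.
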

\begin{proof}
Let $f(s)$ be an almost periodic function in a vertical strip $U=\{s=\sigma+it:\alpha<\sigma<\beta\}$ whose Dirichlet series is given by
$\sum_{n\geq 1}a_ne^{\lambda_ns}$, with $\{\lambda_1,\lambda_2,\ldots,\lambda_k,\ldots\}$ $\mathbb{Q}$-linearly independent and $k>2$. As $R_{f}$ is closed in $(\alpha,\beta)$, the boundary of $R_{f}$ %, denoted by $\partial R_{f}$,
is a subset of $R_{f}$ itself. Then $\sigma _{0}\in R_{f}$ and, by Theorem \ref{point},
inequalities (\ref{PGp}) are
obviously satisfied for $\sigma_{0}$. Moreover,
if some of the inequalities (\ref{PGp}) is an equality, as any couple of
equalities are incompatible, the lemma follows. Otherwise we have
the following strict inequalities %and by continuity there exist open neighbourhoods $%
%\left( a_{k},b_{k}\right) $, $k=1,2,\ldots$ of $\sigma _{0}$
%verifying strictly
\begin{equation}\label{uno}
\left\vert a_{j}\right\vert e^{\sigma_0 \lambda_{j}}< \sum_{i\geq1\text{, }%
i\neq j}\left\vert a_{i}\right\vert e^{\sigma_0 \lambda_{i}} \text{, }%
\left( j=1,2,\ldots,k,\ldots\right).%\tag{3.4}
\end{equation}%
 %By lemma \ref{lem}, let $A=\sup\{|a_n|:n\geq 1\}$.
Now, as $\sigma_0$ is a boundary point of $R_{f}=\overline{\left\{ \operatorname{Re}s:f(s)=0,\ s\in U\right\}}\cap (\alpha,\beta)$, there exists $\varepsilon>0$ such that either $R_f^c\supset (\sigma_0-\varepsilon,\sigma_0)$ or $R_f^c\supset (\sigma_0,\sigma_0+\varepsilon)$, where $R_f^c$ denotes $(\alpha,\beta)\setminus R_f$. Let
$\sigma_1$ be a point in $(\sigma_0-\varepsilon,\sigma_0+\varepsilon)\cap R_f^c$. In virtue of Theorem \ref{point}, it is plain that there exists a single $j_0\geq 1$ so that
\begin{equation}\label{dos}
\left\vert a_{j_0}\right\vert e^{\sigma_1 \lambda_{j_0}} > \sum_{i\geq1\text{, }%
i\neq j_0}\left\vert a_{i}\right\vert e^{\sigma_1 \lambda_{i}}.
\end{equation}
Thus, by continuity, we deduce from (\ref{uno}) and (\ref{dos}), for this $j_0$, that there exists $\sigma_2$ between $\sigma_0$ and $\sigma_1$ such that %--en la continuidad de sutiliza el criterio de Weierstrass porque es suma infinita de funciones continuas
\begin{equation}\label{tres}
\left\vert a_{j_0}\right\vert e^{\sigma_2 \lambda_{j_0}}= \sum_{i\geq1\text{, }%
i\neq j_0}\left\vert a_{i}\right\vert e^{\sigma_2 \lambda_{i}}.
\end{equation}
Moreover, by \eqref{tres} it is clear that
\begin{equation*}\label{cuatro}
\left\vert a_{j}\right\vert e^{\sigma_2 \lambda_{j}}\leq \sum_{i\geq1\text{, }%
i\neq j}\left\vert a_{i}\right\vert e^{\sigma_2 \lambda_{i}} \text{, }%
\left( j=1,2,\ldots,k,\ldots\right).%\tag{3.4}
\end{equation*}%
which implies, again by Theorem \ref{point}, that $\sigma_2\in R_f$. This is a contradiction and hence the result holds.
%Thus any $\sigma \in $
%$\left( a,b\right) :=\bigcap_{k\geq1}\left( a_{k},b_{k}\right)
%$ satisfies (\ref{PGp}) and, from Theorem \ref{point}, $\left(
%a,b\right) \subset R_{f}$. But $\sigma _{0}\in \left(
%a,b\right) $ and this means that $\sigma _{0}$ is an interior point of $%
%R_{f} $, which is a contradiction because $\sigma _{0}\in \partial R_{f}$.
%The lemma is then proved. \textbf{No se puede afirmar tan ligeramente porque la intersecci\'{o}n numerable infinita de abiertos puede no ser abierta}
\end{proof}%\textbf{Hay que analizar antes los valores extremos $a_f$ y $b_f$}

We first prove that the set $R_f$ associated with an almost-periodic function in $U=\mathbb{C}$, whose Fourier exponents are linearly independent over the rational numbers, is not the empty set. %--casi periodic in C es constante? Noo, es casi periodica en toda banda reducida de U lo que implica acotada en cada banda reducida de U, pero no acotada en todo U

\begin{corollary}\label{cor8}
Let $f(s)$ be an almost periodic function in $\mathbb{C}$ whose Fourier exponents
$\{\lambda_1,\lambda_2,\ldots,\lambda_k,\ldots\}$, with $k>2$, are $\mathbb{Q}$-linearly independent.
Then $R_{f}\neq\emptyset$. %-- puede ser vacío. Pon el resultado para U=C
%\textbf{Ojo porque si suponemos que $f$ es analítica y casi periódica en $\mathbb{C}$ entonces es acotada y, por Liouville, constante} No por ser acotada en cualquier banda cerrada, no es acotado en todo C
\end{corollary}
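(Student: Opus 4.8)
The plan is to translate the statement, via Theorem \ref{point}, into a purely real-variable covering problem on the line. Since $U=\mathbb{C}$ we have $(\alpha,\beta)=\mathbb{R}$, and the absolute convergence of $\sum_{j}|a_j|e^{\sigma\lambda_j}$ for every real $\sigma$ (guaranteed by the $\mathbb{Q}$-linear independence of the exponents) lets me work term by term. For each index $j$ I would introduce the ``bad set''
$$A_j:=\left\{\sigma\in\mathbb{R}:\ |a_j|e^{\sigma\lambda_j}>\sum_{i\geq1,\,i\neq j}|a_i|e^{\sigma\lambda_i}\right\},$$
so that, by Theorem \ref{point}, $R_f=\mathbb{R}\setminus\bigcup_{j\geq1}A_j$. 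Thus it suffices to show that the sets $A_j$ cannot cover all of $\mathbb{R}$.

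The first key step is to prove that each $A_j$ is an open interval. Dividing the defining inequality by $|a_j|e^{\sigma\lambda_j}$ (legitimate since $a_j\neq 0$, being a genuine Fourier coefficient) rewrites $\sigma\in A_j$ as $\psi_j(\sigma)<1$, where $\psi_j(\sigma):=\sum_{i\neq j}\frac{|a_i|}{|a_j|}e^{\sigma(\lambda_i-\lambda_j)}$. Each summand is a convex function of $\sigma$, so $\psi_j$ is convex (and finite, hence continuous, by absolute convergence); consequently its strict sublevel set $\{\psi_j<1\}=A_j$ is an open interval. The second step is that the family $\{A_j\}$ is pairwise disjoint: if some $\sigma$ lay in both $A_j$ and $A_{j'}$ then $|a_j|e^{\sigma\lambda_j}$ would exceed the sum of all the remaining terms and in particular $|a_{j'}|e^{\sigma\lambda_{j'}}$, and symmetrically, which is impossible. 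The third step records that no $A_j$ equals all of $\mathbb{R}$: because there is at least one other exponent $\lambda_i\neq\lambda_j$, we get $\psi_j(\sigma)\to\infty$ either as $\sigma\to+\infty$ (if $\lambda_i>\lambda_j$) or as $\sigma\to-\infty$ (if $\lambda_i<\lambda_j$), so $A_j$ omits a ray.

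Finally I would assemble these facts with an elementary connectedness argument. If every $A_j$ is empty, then $R_f=\mathbb{R}$ and we are done. Otherwise choose a nonempty $A_{j_0}$; being a proper open interval it has a finite endpoint $q$. I claim $q\in R_f$: indeed $q\notin A_{j_0}$ because the interval is open, and $q$ cannot lie in any other $A_{j_1}$, for an open $A_{j_1}$ containing $q$ would contain a whole neighbourhood of $q$, meeting $A_{j_0}$ at points just below $q$ and contradicting disjointness. Hence $q\in\mathbb{R}\setminus\bigcup_{j}A_j=R_f$, so $R_f\neq\emptyset$.

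I expect the main obstacle to be the convexity observation that turns each bad set into an interval; everything afterwards (disjointness, the ``not all of $\mathbb{R}$'' estimate, and the endpoint argument) is soft and essentially topological. The minor technical point to watch is the justification of convexity and continuity of the infinite sum $\psi_j$, which follows from the locally uniform absolute convergence supplied by the $\mathbb{Q}$-independence hypothesis together with the assumption $U=\mathbb{C}$.
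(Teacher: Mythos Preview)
Your proof is correct. Both you and the paper begin by invoking Theorem \ref{point} to reduce to a real-variable inequality problem, but the arguments diverge thereafter. The paper proceeds by contradiction: assuming $R_f=\emptyset$, it uses the intermediate value theorem to show that a \emph{single} index $j_0$ must strictly dominate at every $\sigma\in\mathbb{R}$ (a boundary point of the dominance region would force equality for $j_0$, and equality for one index automatically yields the remaining polygon inequalities, placing that point in $R_f$); it then contradicts this by solving $|a_k|e^{\sigma_0\lambda_k}=|a_{j_0}|e^{\sigma_0\lambda_{j_0}}$ explicitly for some $k\neq j_0$. Your route is more structural and direct: you introduce the bad sets $A_j$, use convexity of $\psi_j$ (the same observation the paper reserves for Lemma \ref{lem8}) to see that each $A_j$ is an open interval, record pairwise disjointness, and exhibit a point of $R_f$ as a finite endpoint of any nonempty $A_{j_0}$. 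Your approach packages slightly more information (it essentially subsumes Lemma \ref{lem8}) and avoids the reductio; the paper's is shorter and more computational. One cosmetic slip: when you say the neighbourhood of $q$ meets $A_{j_0}$ ``at points just below $q$'', this should read ``just above'' if $q$ happens to be the left endpoint, but the argument is unaffected.
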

\begin{proof}
Let $f(s)$ be an almost periodic function in $\mathbb{C}$ whose Dirichlet series is given by $\sum_{n\geq 1} a_ne^{\lambda_n s}$. By reductio ad absurdum, suppose $R_{f}=\emptyset$. By Theorem \ref{point}, there exists $j_0\geq 1$ so that
\begin{equation}\label{doss}
\left\vert a_{j_0}\right\vert e^{\sigma \lambda_{j_0}} > \sum_{i\geq1\text{, }%
i\neq j_0}\left\vert a_{i}\right\vert e^{\sigma \lambda_{i}},\ \mbox{for all }\sigma\in\mathbb{R}.
\end{equation}
Otherwise (if the inequality is not true for all $\sigma\in\mathbb{R}$), by continuity, there would exist $\sigma_0\in\mathbb{R}$ such that $\left\vert a_{j_0}\right\vert e^{\sigma_0 \lambda_{j_0}} = \sum_{i\geq1\text{, }%
i\neq j_0}\left\vert a_{i}\right\vert e^{\sigma_0 \lambda_{i}}$ and hence, by Theorem \ref{point}, it is clear that $\sigma_0$ would be in $R_f$.
%Let $g(\sigma):=\sum_{i\geq1\text{, }%
%i\neq j_0}\left\vert a_{i}\right\vert e^{\sigma \lambda_{i}}-\left\vert a_{j_0}\right\vert e^{\sigma \lambda_{j_0}},\ \sigma\in\mathbb{R}.
%$ We deduce from \eqref{doss}  that
%\begin{equation}\label{hut}
%g(\sigma)<0 \mbox{ for all }\sigma\in\mathbb{R}.
%\end{equation}

However, we next show that (\ref{doss}) is a contradiction. Indeed, take $k\neq j_0$. Since $\lambda_k\neq \lambda_{j_0}$, it is plain that there exists $\sigma_0\in\mathbb{R}$ such that $|a_k|e^{\sigma_0\lambda_k}=|a_{j_0}|e^{\sigma_0\lambda_{j_0}}$, which implies that
$$\sum_{i\geq1\text{, }%
i\neq j_0}\left\vert a_{i}\right\vert e^{\sigma_0 \lambda_{i}}>|a_k|e^{\sigma_0\lambda_k}=|a_{j_0}|e^{\sigma_0\lambda_{j_0}},$$
which contradicts (\ref{doss}). Now the result holds.
\end{proof}

It is worth noting that the condition $U=\mathbb{C}$ in Corollary \ref{cor8} is necessary.
That is, an almost periodic function $f(s)$ in a vertical strip $U$, with $U\neq \mathbb{C}$, such that its Fourier exponents
are $\mathbb{Q}$-linearly independent could satisfy $R_f=\emptyset$.
\begin{example}\label{exampler}
%We first introduce the function $l:\mathbb{N}\to\mathbb{R}$ given by $$l(n)=\max\{\log p: \log p<n\mbox{ and }p\in\mathbb{N}\mbox{ is prime}\},\ n\in\mathbb{N}.$$
%Note that $\log n\leq l(n)<n$ for any $n\in\mathbb{N}$.
Let $\{1,\lambda_1,\lambda_2,\ldots,\lambda_n,\ldots,\rho_1,\rho_2,\ldots,\rho_n,\ldots\}$
be an ordered set of positive real numbers which are linearly
independent over the rational numbers and satisfy $\lambda_n>n^2$ (hence $\rho_n>n^2$) for each $n=1,2,\ldots$.
It is clear that such a set can be constructed from an arbitrary Hamel basis for $\mathbb{R}$ over $\mathbb{Q}$ in virtue of the density of rational numbers in $\mathbb{R}$. %La demostración no es difícil, ya que como los racionales son densos en R, dada una base de Hammel {b_1,b_2,...}, siempre puedo cambiar un elemento de la base b_j por otro q_jb__j que cumpla las condiciones exigidas y seguirá siendo base.

On the one hand, consider the exponential sum
$$S_1(s)=\sum_{n\geq 1}\frac{1}{\lambda_n}e^{\lambda_ns}.$$
Note that the exponents of $S_1$ are linearly independent over the rational numbers.
Moreover, $S_1(s)$ converges absolutely on $U_1=\{s=\sigma+it\in\mathbb{C}:\sigma< 0\}$.
Indeed, for any $\sigma< 0$, it is satisfied that
$$\sum_{n\geq 1}\frac{1}{\lambda_n}e^{\lambda_n\sigma}\leq\sum_{n\geq 1}\frac{1}{\lambda_n}\leq \sum_{n\geq 1}\frac{1 }{n^2}=\frac{ \pi^2}{ 6}$$
and the derivative of $S_1(s)$ accomplishes
$$S_1'(\sigma)=\sum_{n\geq 1}e^{\lambda_n\sigma},$$
which implies that  $S_1(s)$ diverges when $\sigma>0$.

On the other hand, consider the exponential sum
$$S_2(s)=\sum_{n\geq 1}\frac{1}{\rho_n}e^{-\rho_n(s+1)}=\sum_{n\geq 1}\frac{1}{\rho_n}e^{-\rho_n}e^{-\rho_ns}.$$
Note that the exponents of $S_2$ are linearly independent over the rational numbers.
Moreover, $S_2(s)$ converges absolutely on $U_2=\{s=\sigma+it\in\mathbb{C}:\sigma>-1\}$.
Indeed, for any $\sigma>-1$, it is satisfied that
$$\sum_{n\geq 1}\frac{1}{\rho_n}e^{-\rho_n}e^{-\rho_n\sigma}\leq\sum_{n\geq 1}\frac{1}{\rho_n}\leq \sum_{n\geq 1}\frac{1 }{n^2}=\frac{ \pi^2}{ 6}$$
and the derivative accomplishes
$$S_2'(\sigma)=\sum_{n\geq 1}-e^{-\rho_n(\sigma+1)},$$
which implies that  $S_2(s)$ diverges when $\sigma<-1$.

In this way, let $S(s):=S_1(s)+S_2(s)+\Frac{e\pi^2}{3}e^s$, $s\in\mathbb{C}$.
By above, it is accomplished that $S(s)$ is an exponential sum, whose exponents are linearly independent over the rationals,
which converges uniformly in
$U=U_1\cap U_2=\{s=\sigma+it\in\mathbb{C}:-1<\sigma<0\}$. In fact, $U$ is the largest open vertical strip of almost periodicity of $S(s)$.
Moreover, $$|S_1(s)+S_2(s)|\leq2\frac{ \pi^2}{ 6}=\frac{ \pi^2}{ 3}< \left|\frac{e\pi^2}{3}e^s\right|\ \forall s\in U.$$
Consequently, $S(s)$ has no zeros in $U$ and hence $R_S=\emptyset$.
%Also, $S_1^*(s)=S_1(s)+\Frac{e\pi^2}{3}e^s$ and $S_2^*(s)=S_2(s)+\Frac{e\pi^2}{3}e^s$ have no zeros in $U_1$ and $U_2$, respectively.
\end{example}

In this respect, we next study the following more general result.
\begin{corollary}\label{cor9}
Let $f(s)$ be an almost periodic function in a vertical strip $U=\{s=\sigma+it:\alpha<\sigma<\beta\}$
whose Dirichlet series is given by $\sum_{n\geq 1} a_ne^{\lambda_n s}$ and $\{\lambda_1,\lambda_2,\ldots,\lambda_k,\ldots\}$, with $k>2$, are $\mathbb{Q}$-linearly independent.
Suppose that $U$ is the largest open vertical strip of almost periodicity of $f$ and some of the following conditions is satisfied:
\begin{itemize}
\item[a)] $\Lim_{\sigma\to\alpha^+}\sum_{i\geq1}|a_i|e^{\sigma\lambda_i}>2\sup\{|a_i|e^{\alpha\lambda_i}:i\geq 1\}$; %$\lim_{\sigma\to\alpha^+}\sum_{i\geq1}|a_{i}| e^{\sigma \lambda_{i}}=\infty$ or
\item[b)] $\Lim_{\sigma\to\beta^-}\sum_{i\geq1}|a_i|e^{\sigma\lambda_i}>2\sup\{|a_i|e^{\beta\lambda_i}:i\geq 1\}$. %$\lim_{\sigma\to\beta^-}\sum_{i\geq1}|a_{i}| e^{\sigma \lambda_{i}}=\infty$
\end{itemize}
Then $R_{f}\neq\emptyset$.
\end{corollary}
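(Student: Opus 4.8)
The plan is to argue by \emph{reductio ad absurdum}, exactly in the spirit of the proof of Corollary~\ref{cor8}, using the pointwise characterization of Theorem~\ref{point}. Write $T_j(\sigma):=|a_j|e^{\sigma\lambda_j}$ and $\Sigma(\sigma):=\sum_{i\geq1}|a_i|e^{\sigma\lambda_i}$, which is finite on $(\alpha,\beta)$ because the $\mathbb{Q}$-linear independence of the exponents forces absolute convergence there. Since the inequality $T_j(\sigma)\le\sum_{i\ne j}T_i(\sigma)$ is equivalent to $2T_j(\sigma)\le\Sigma(\sigma)$, Theorem~\ref{point} says that $\sigma\in R_f$ if and only if $2T_j(\sigma)\le\Sigma(\sigma)$ for every $j$. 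So it suffices to produce a single $\sigma_0\in(\alpha,\beta)$ at which no term exceeds half of the total mass. Assume, for contradiction, that $R_f=\emptyset$; then for each $\sigma\in(\alpha,\beta)$ there is an index with $2T_j(\sigma)>\Sigma(\sigma)$.

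First I would globalize this failure to a single dominant index. For each $j$ set $A_j:=\{\sigma\in(\alpha,\beta):2T_j(\sigma)>\Sigma(\sigma)\}$. Each $A_j$ is open, because $\Sigma$ is continuous on $(\alpha,\beta)$: on any compact subinterval the bound $T_i(\sigma)\le T_i(\sigma_1)+T_i(\sigma_2)$ at the endpoints and the Weierstrass $M$-test yield local uniform convergence of $\sum_i T_i$. The sets $A_j$ are pairwise disjoint, since $2T_{j_1}(\sigma)>\Sigma(\sigma)$ and $2T_{j_2}(\sigma)>\Sigma(\sigma)$ with $j_1\ne j_2$ would force $T_{j_1}(\sigma)+T_{j_2}(\sigma)>\Sigma(\sigma)$, contradicting $T_{j_1}(\sigma)+T_{j_2}(\sigma)\le\Sigma(\sigma)$. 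Under the assumption $R_f=\emptyset$ these open sets cover the connected interval $(\alpha,\beta)$, so exactly one of them is nonempty and equals the whole interval. Hence there is a fixed $j_0$ with
\[
2|a_{j_0}|e^{\sigma\lambda_{j_0}}>\Sigma(\sigma)\qquad\text{for all }\sigma\in(\alpha,\beta),
\]
the analogue of inequality~(\ref{doss}) but now valid only on the strip.

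To finish I would push this inequality to a boundary of the strip and invoke a) or b). Being a sum of the convex functions $e^{\sigma\lambda_i}$, $\Sigma$ is convex, so its one-sided limits at $\alpha$ and $\beta$ exist in $(0,+\infty]$; in particular the quantities appearing in a) and b) are well defined. Assuming a), let $\sigma\to\alpha^+$ in the displayed inequality: the left side tends to $2|a_{j_0}|e^{\alpha\lambda_{j_0}}$ by continuity, and the strict inequality passes to the limit as $2|a_{j_0}|e^{\alpha\lambda_{j_0}}\ge\Lim_{\sigma\to\alpha^+}\Sigma(\sigma)$. Since $2|a_{j_0}|e^{\alpha\lambda_{j_0}}\le2\sup\{|a_i|e^{\alpha\lambda_i}:i\ge1\}$, this contradicts a). The case b) is identical with $\sigma\to\beta^-$.

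The main obstacle I anticipate is the globalization step: unlike Corollary~\ref{cor8}, where one could exhibit a concrete equalizing $\sigma_0\in\mathbb{R}$ making a dominant term coincide with a competitor, here such an equalizing value may fall outside the strip, so the contradiction must instead be extracted at the boundary. The clean way around this is the connectedness argument pinning down a single index $j_0$ valid on all of $(\alpha,\beta)$, which hinges on the continuity of $\Sigma$ and on the incompatibility of two simultaneous strict dominances; establishing the existence of the boundary limit via convexity is then the only remaining technical point.
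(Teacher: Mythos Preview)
Your argument is correct and follows the same overall route as the paper: assume $R_f=\emptyset$, pin down a single dominant index $j_0$ valid on all of $(\alpha,\beta)$ (the paper does this by an IVT/continuity argument equivalent to your open-cover connectedness argument), and then contradict hypothesis a) or b) near the boundary. The only cosmetic difference is in that last step: the paper perturbs from $\alpha$ to an interior point $\alpha+\tau$ at which $\Sigma(\alpha+\tau)>2T_{j_0}(\alpha+\tau)$, whereas you pass directly to the limit $\sigma\to\alpha^+$ using convexity of $\Sigma$; both maneuvers yield the same contradiction.
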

\begin{proof}
Let $f(s)$ be an almost periodic function in $U$ whose Dirichlet series is given by $\sum_{n\geq 1} a_ne^{\lambda_n s}$ and $\{\lambda_1,\lambda_2,\ldots,\lambda_k,\ldots\}$, with $k>2$, are $\mathbb{Q}$-linearly independent.
By \cite[p. 154, first theorem]{Besi}, $\sum_{n\geq 1} a_ne^{\lambda_n s}$ is absolutely convergent in $U$ and hence $f(s)=\sum_{n\geq 1} a_ne^{\lambda_n s}$ for any $s\in U$.
Let $f_1(s):=\sum_{n\geq 1} |a_n|e^{\lambda_n s}$, which is clearly anaytic on $U$, then $U$ is the largest open vertical
strip of almost periodicity of $f_1$. Indeed, it is plain that $f_1$ is almost periodic on $U$ (it converges absolutely on $U$).
Moreover, if there was an open vertical strip $V\supset U$, with $U\neq V$ and $f_1$ almost periodic on $V$, then
$$\left|\sum_{n\geq 1} a_ne^{\lambda_n s}\right|\leq \sum_{n\geq 1} |a_n|e^{\lambda_n \sigma}=f_1(\sigma)<\infty\ \forall s=\sigma+it\in V$$
and hence $V$ would be an open vertical strip where $f$ converges absolutely, which is a contradiction.

Now, by reductio ad absurdum, suppose $R_{f}=\emptyset$. By Theorem \ref{point}, there exists $j_0\geq 1$ so that
\begin{equation}\label{doss0}
 |a_{j_0}| e^{\sigma \lambda_{j_0}} > \sum_{i\geq1\text{, }%
i\neq j_0}|a_{i}|e^{\sigma \lambda_{i}},\ \mbox{for all }\sigma\in(\alpha,\beta).
\end{equation}
Otherwise (if the inequality is not true for all $\sigma\in(\alpha,\beta)$), by continuity, there would exist
$\sigma_0\in(\alpha,\beta)$ such that $ |a_{j_0}|e^{\sigma_0 \lambda_{j_0}} = \sum_{i\geq1\text{, }%
i\neq j_0} |a_{i}| e^{\sigma_0 \lambda_{i}}$ and hence, by Theorem \ref{point}, it is clear that $\sigma_0$ would be in $R_f$.
Let $g(\sigma):=\sum_{i\geq1\text{, }%
i\neq j_0}|a_{i}| e^{\sigma \lambda_{i}}-|a_{j_0}|e^{\sigma \lambda_{j_0}},\ \sigma\in(\alpha,\beta).
$ We deduce from \eqref{doss0}  that
\begin{equation}\label{hut0}
g(\sigma)<0 \mbox{ for all }\sigma\in(\alpha,\beta).
\end{equation}
However, we next show that this is a contradiction.
Note first that, by taking into account \cite[p. 154, second theorem]{Besi}, $\alpha$ and $\beta$ are singular points of $f_1(s)$.
By hypothesis (condition a)), given $\varepsilon>0$,
there exists
$\sigma_1\in(\alpha,\alpha+\varepsilon)$ such that
\begin{equation*}\label{lki12}
\sum_{i\geq1}|a_i|e^{\sigma\lambda_i}>2\sup\{|a_i|e^{\alpha\lambda_i}:i\geq 1\}\geq 2|a_{j_0}| e^{\alpha \lambda_{j_0}}\ \forall \sigma\in(\alpha,\sigma_1)
\end{equation*}
or (condition b))
there exists
$\sigma_2\in(\beta-\varepsilon,\beta)$ such that
\begin{equation*}\label{lki}
\sum_{i\geq1}|a_i|e^{\sigma\lambda_i}>2\sup\{|a_i|e^{\beta\lambda_i}:i\geq 1\}\geq 2|a_{j_0}| e^{\beta \lambda_{j_0}}\ \forall \sigma\in(\sigma_2,\beta).
\end{equation*}
%We must take $\varepsilon$ such that $|2|a_{j_0}| e^{\alpha \lambda_{j_0}}-2|a_{j_0}| e^{(\alpha+\varepsilon) \lambda_{j_0}}|<$
Therefore, by continuity, there exists $0<\tau<\min\{\sigma_1-\alpha,\beta-\sigma_2\}$, sufficiently small, such that
\begin{equation*}\label{lki1200}
\sum_{i\geq1}|a_i|e^{\sigma\lambda_i}>2|a_{j_0}| e^{(\alpha+\tau) \lambda_{j_0}}\ \forall \sigma\in(\alpha,\sigma_1)
\end{equation*}
or
\begin{equation*}\label{lki00}
\sum_{i\geq1}|a_i|e^{\sigma\lambda_i}>2|a_{j_0}| e^{(\beta-\tau) \lambda_{j_0}}\ \forall \sigma\in(\sigma_2,\beta).
\end{equation*}
In particular, we get
\begin{equation}\label{lki1200}
\sum_{i\geq1}|a_i|e^{(\alpha+\tau)\lambda_i}>2|a_{j_0}| e^{(\alpha+\tau) \lambda_{j_0}}
\end{equation}
or
\begin{equation}\label{lki00}
\sum_{i\geq1}|a_i|e^{(\beta-\tau)\lambda_i}>2|a_{j_0}| e^{(\beta-\tau) \lambda_{j_0}}.
\end{equation}
Since (\ref{lki1200}) and (\ref{lki00}) imply $g(\alpha+\tau)>0$ and $g(\beta-\tau)>0$, respectively, we get a contradiction
with (\ref{hut0}).
Now the result follows.
\end{proof}

We next focus our attention on the real solutions of
the equations
\begin{equation}\label{4.1}
\left\vert a_{j}\right\vert e^{\lambda_{j}\sigma }=\sum_{i\geq 1\text{,
}i\neq j}\left\vert a_{i}\right\vert e^{\lambda_{i}\sigma },\ j=1,2,\ldots,k,\ldots.
\end{equation}
%where $j=1,2,\ldots,k,\ldots$ is so that $\lambda_j$ is not the supremum nor the infimum of the set $\{\lambda_j:j=1,2,\ldots\}$.

\begin{lemma}\label{lem8}
Let $f(s)$ be an almost periodic function in a vertical strip $U=\{s\in\mathbb{C}:\alpha<\operatorname{Re}s<\beta\}$ whose Fourier exponents $\{\lambda_1,\lambda_2,\ldots,\lambda_k,\ldots\}$, with $k>2$, are $\mathbb{Q}$-linearly independent. Then each equation (\ref{4.1})
has at most $2$ real solutions in $(\alpha,\beta)$.
\end{lemma}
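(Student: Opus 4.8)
The plan is to reduce each equation \eqref{4.1} to an equation of the form $\varphi_j(\sigma)=c_j$ for a strictly convex function $\varphi_j$, and then to invoke the elementary fact that a strictly convex function attains each value at most twice.

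First I would fix an index $j$ and recall that, by the $\mathbb{Q}$-linear independence of the Fourier exponents, the series $\sum_{n\geq 1}|a_n|e^{\lambda_n\sigma}$ converges absolutely for every $\sigma\in(\alpha,\beta)$ \cite[Theorem 3.6]{Corduneanu}. Since $e^{\lambda_j\sigma}>0$, dividing \eqref{4.1} by $e^{\lambda_j\sigma}$ shows that $\sigma\in(\alpha,\beta)$ is a solution of \eqref{4.1} if and only if
\begin{equation*}
\varphi_j(\sigma):=\sum_{i\geq 1}|a_i|e^{(\lambda_i-\lambda_j)\sigma}=2|a_j|,
\end{equation*}
where $\varphi_j$ is well-defined and finite on $(\alpha,\beta)$ precisely by the absolute convergence just recalled (the $i=j$ term contributes the constant $|a_j|$, and moving it across turns the identity $\sum_{i\neq j}|a_i|e^{(\lambda_i-\lambda_j)\sigma}=|a_j|$ into $\varphi_j(\sigma)=2|a_j|$).

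Next I would prove that $\varphi_j$ is strictly convex on $(\alpha,\beta)$. Each summand $\sigma\mapsto|a_i|e^{(\lambda_i-\lambda_j)\sigma}$ is convex, being a nonnegative multiple of an exponential, and it is \emph{strictly} convex whenever $\lambda_i\neq\lambda_j$. Since $k>2$ there is at least one index $i\neq j$, and $|a_i|>0$ because the $a_i$ are Fourier coefficients, so at least one summand is strictly convex. For $\sigma_1\neq\sigma_2$ in $(\alpha,\beta)$ and $t\in(0,1)$, applying the convexity of each summand term by term and summing the (absolutely convergent) series gives
\begin{equation*}
\varphi_j\bigl(t\sigma_1+(1-t)\sigma_2\bigr)\leq t\,\varphi_j(\sigma_1)+(1-t)\,\varphi_j(\sigma_2),
\end{equation*}
and the inequality is strict because it is already strict in the strictly convex summand. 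Hence $\varphi_j$ is strictly convex.

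Finally, a strictly convex function takes any prescribed value at most twice: if there were three solutions $\sigma_1<\sigma_2<\sigma_3$ of $\varphi_j(\sigma)=2|a_j|$, strict convexity would force $\varphi_j(\sigma_2)$ to lie strictly below the chord joining $(\sigma_1,2|a_j|)$ and $(\sigma_3,2|a_j|)$, that is $\varphi_j(\sigma_2)<2|a_j|$, contradicting $\varphi_j(\sigma_2)=2|a_j|$. Thus equation \eqref{4.1} has at most two real solutions in $(\alpha,\beta)$, as claimed. The one point I expect to require care — and the step I would present most explicitly — is the passage from the convexity of each individual summand to the strict convexity of the infinite sum, which is exactly where the absolute convergence on $(\alpha,\beta)$ is needed; working with the midpoint (Jensen) inequality rather than with a term-by-term second derivative avoids any delicate justification of differentiating the series.
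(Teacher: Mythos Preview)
Your argument is correct and follows essentially the same route as the paper: divide equation~(\ref{4.1}) by $e^{\lambda_j\sigma}$ and use convexity of the resulting sum of exponentials to bound the number of solutions by two. The only minor difference is that you establish strict convexity directly from the Jensen inequality, whereas the paper invokes $B_j''(\sigma)\geq 0$ via term-by-term differentiation; your version is arguably cleaner in that it sidesteps the need to justify differentiating the series and makes the \emph{strict} convexity (which is what is actually needed) explicit.
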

\begin{proof}
Let $f(s)$ be an almost periodic function in $U=\{s\in\mathbb{C}:\alpha<\operatorname{Re}s<\beta\}$ whose Dirichlet series is given by $\sum_{n\geq 1} a_ne^{\lambda_n s}$
with $\{\lambda_1,\lambda_2,\ldots,\lambda_k,\ldots\}$ $\mathbb{Q}$-linearly independent and $k>2$.
Fixed $j=1,2,...,k,\ldots$, %so that $\lambda_j$ is not the maximum nor the minimum of the set $\{\lambda_j:j=1,2,\ldots\}$,
we define the real function
\[
f_{j}(\sigma ):=\sum_{i\geq 1,i\neq j}\left\vert a_{i}\right\vert e^{\lambda_{i}\sigma
}-\left\vert a_{j}\right\vert e^{\lambda_{j}\sigma},\ \sigma\in(\alpha,\beta).
\]%
Also, by dividing  by
$|a_j|e^{\lambda_j\sigma}$, consider
 \begin{equation*}\label{equalityj}
 B_j(\sigma):=\frac{f_{j}(\sigma )}{
 |a_j|e^{\lambda_j\sigma}}=\sum_{i\geq 1,i\neq j}\Frac{|a_i|}{|a_j|}e^{(\lambda_i-\lambda_j)\sigma}-1,\ \sigma\in(\alpha,\beta).
 \end{equation*}
%Now, if we define
%$$B(\sigma):=\sum_{i\geq 1,\lambda_i<\lambda_j}\Frac{|a_i|}{|a_j|}e^{(\lambda_i-\lambda_j)\sigma}+
% \sum_{i\geq 1,\lambda_i>\lambda_j}\Frac{|a_i|}{|a_j|}e^{(\lambda_i-\lambda_j)\sigma},\ \sigma\in (\alpha,\beta),$$ then
%$f_j(\sigma)=0$  becomes $B(\sigma)=1$.
%$\Lim_{x\rightarrow-\infty}B(x)=+\infty$,
%$\Lim_{x\rightarrow +\infty} B(x)=0$ and $B(x)$ is strictly
%decreasing. In the same manner, $\Lim_{x\rightarrow
%-\infty}C(x)=0$, $\Lim_{x\rightarrow +\infty}C(x)= +\infty$ and
%$C(x)$ is strictly increasing. Therefore,
It is worth noting that, since $f(s)$ is analytic in $U$, then it is uniformly continuous in any open interval interior to $U$ together with all its derivatives \cite[p. 142]{Besi}.
Thus it is easy to check that %$B(\sigma)$ takes values arbitrarily large as $\sigma$ decreases and increases.
$B_j^{''}(\sigma)\geq 0$ for all $\sigma\in (\alpha,\beta)$, i.e. $B_j(\sigma)$ is convex in $(\alpha,\beta)$.
%about the derivative, we have $
%\Lim_{\sigma\rightarrow -\infty}B'(\sigma)=-\infty \mbox{ and
%}\Lim_{\sigma\rightarrow +\infty}B'(\sigma)= +\infty.$ Also,
%$B'(\sigma)$ is strictly increasing and then $B(\sigma)$ has a
%unique minimum.
Consequently, equation $B(\sigma)=0$, $\sigma\in (\alpha,\beta)$,  has at
most two solutions. Thus the result holds.
%In fact,
%\begin{itemize}
%\item[i)] if there exists $\sigma_0\in(\alpha,\beta)$ such that
%$B(\sigma_0)<1$, $B(\sigma)=1$ has two solutions; \item[ii)]
%if $B(\sigma)\geq 1$ $\forall \sigma\in\mathbb{R}$ and there
%exists some $\sigma_0\in\mathbb{R}$ such that $B(\sigma_0)=1$,
%then $B(\sigma)=1$ has one solution; \item[iii)] otherwise,
%$B(\sigma)=1$ has no solutions.
%\end{itemize}
\end{proof}

At this point, we prove the following important result which generalizes \cite[Theorem 7]{Gaps}.
\begin{theorem}\label{theorem}
The set of the real projections of the zeros of an almost periodic function
 in an open vertical strip $U$ whose Fourier exponents $\{\lambda_1,\lambda_2,\ldots,\lambda_k,\ldots\}$,
 with $k>2$, are $\mathbb{Q}$-linearly independent has no isolated point in $U$.
\end{theorem}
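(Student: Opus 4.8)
The plan is to prove that the closed set $R_f$ itself has no isolated point, and then transfer this to the set of real projections $P_f:=\{\operatorname{Re}s:f(s)=0,\ s\in U\}$ via the inclusions $P_f\subseteq R_f\subseteq\overline{P_f}$. First I would record the global description of $R_f$ supplied by Theorem \ref{point}: writing $T(\sigma):=\sum_{i\ge 1}|a_i|e^{\sigma\lambda_i}$, which converges absolutely on $(\alpha,\beta)$, a point $\sigma\in(\alpha,\beta)$ lies in $R_f$ if and only if $2|a_j|e^{\sigma\lambda_j}\le T(\sigma)$ for every $j\ge 1$, that is, if and only if $B_j(\sigma)\ge 0$ for all $j$, where $B_j$ is the convex function introduced in the proof of Lemma \ref{lem8}. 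Consequently $R_f^c=(\alpha,\beta)\setminus R_f=\bigcup_{j\ge 1} I_j$ with $I_j:=\{\sigma\in(\alpha,\beta):B_j(\sigma)<0\}$; since each $B_j$ is convex and continuous, each $I_j$ is an \emph{open subinterval} of $(\alpha,\beta)$. This structural feature is what the whole argument will exploit.

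Next, arguing by contradiction, I would assume that $\sigma_0$ is an isolated point of $R_f$. Then $\sigma_0$ is in particular a boundary point of $R_f$, so Proposition \ref{lem2} applies: all inequalities (\ref{PGp}) hold at $\sigma_0$ and exactly one of them, say the one of index $j_0$, is an equality. Thus $B_{j_0}(\sigma_0)=0$ while $B_j(\sigma_0)>0$ for every $j\ne j_0$; equivalently, setting $c:=|a_{j_0}|e^{\sigma_0\lambda_{j_0}}>0$, we have $|a_j|e^{\sigma_0\lambda_j}<c$ for all $j\ne j_0$, so that $j_0$ is the strict, unique maximizer of $j\mapsto |a_j|e^{\sigma_0\lambda_j}$.

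The crux --- and the step I expect to be the main obstacle, precisely because there may be infinitely many exponents --- is to upgrade this pointwise maximum to a neighbourhood: I claim there is a neighbourhood $N$ of $\sigma_0$ on which $|a_j|e^{\sigma\lambda_j}<|a_{j_0}|e^{\sigma\lambda_{j_0}}$ holds \emph{simultaneously} for all $j\ne j_0$. To dispatch the infinitely many large indices at once I would use that $\sum_{i\ge 1}|a_i|e^{\sigma\lambda_i}$ converges uniformly on a compact subinterval $[\sigma_0-\eta,\sigma_0+\eta]\subset(\alpha,\beta)$ (each summand is convex, hence bounded there by the sum of its two endpoint values, so the Weierstrass $M$-test applies); this makes $\sup_{j>N_0}|a_j|e^{\sigma\lambda_j}<c/2$ uniformly, while continuity keeps $|a_{j_0}|e^{\sigma\lambda_{j_0}}>c/2$ near $\sigma_0$. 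The finitely many remaining indices $j\le N_0$, $j\ne j_0$, are then handled by ordinary continuity. On this $N$ the whole membership question in $R_f$ collapses onto the single index $j_0$: if $B_j(\sigma)<0$ for some $j\ne j_0$ and $\sigma\in N$, then $2|a_j|e^{\sigma\lambda_j}>T(\sigma)\ge |a_{j_0}|e^{\sigma\lambda_{j_0}}+|a_j|e^{\sigma\lambda_j}$ would force $|a_j|e^{\sigma\lambda_j}>|a_{j_0}|e^{\sigma\lambda_{j_0}}$, against the choice of $N$. Hence $R_f^c\cap N=I_{j_0}\cap N$.

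Finally I would close the contradiction. Because $\sigma_0$ is isolated in $R_f$, the punctured neighbourhood $N\setminus\{\sigma_0\}$ is contained in $R_f^c\cap N=I_{j_0}$; but $I_{j_0}$ is an interval that then meets both sides of $\sigma_0$, so it must contain $\sigma_0$ itself, contradicting $B_{j_0}(\sigma_0)=0\not<0$. This shows $R_f$ has no isolated point. To conclude the theorem as stated, I would transfer this to $P_f$: an isolated point $\sigma_0$ of $P_f$ satisfies $\overline{P_f}\cap(\sigma_0-\varepsilon,\sigma_0+\varepsilon)=\{\sigma_0\}$ for $\varepsilon$ small, hence is an isolated point of $R_f=\overline{P_f}\cap(\alpha,\beta)$, which we have just ruled out.
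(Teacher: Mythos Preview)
Your proof is correct and follows essentially the same route as the paper's: reduce to a boundary point of $R_f$, invoke Proposition~\ref{lem2} to single out the unique index $j_0$ where equality holds, and then use the convexity of $B_{j_0}$ from Lemma~\ref{lem8} together with Theorem~\ref{point} to contradict isolation. The one substantive difference is that where the paper simply asserts that $B_k(\sigma)\ge 0$ near $\sigma_0$ ``implies that $\sigma$ satisfies inequalities~(\ref{PGp})'', you supply the explicit uniformity argument (via the $M$-test on a compact subinterval) showing that $j_0$ remains the strict maximizer on a whole neighbourhood, so that no other index can violate~(\ref{PGp}) there---a clarification that is genuinely needed when there are infinitely many Fourier exponents.
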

\begin{proof}
If the real projection of a zero $s_0\in U$ of $f(s)$, say $\sigma_0$, were an isolated point of
the set $\{\operatorname{Re}s : f(s) = 0,\ s\in U\}$, necessarily $\sigma_0$ would be a boundary point
of the set $R_f=\overline{\left\{ \operatorname{Re}s:f(s)=0,\ s\in U\right\}}\cap (\alpha,\beta)$, with $\sigma_0\in(\alpha,\beta)$.
By Proposition \ref{lem2},
it satisfies all
the inequalities (\ref{PGp}), that is,
\begin{equation*}%\label{PGp}
\left\vert a_{j}\right\vert e^{\sigma_0 \lambda_{j}} \leq \sum_{i\geq1\text{, }%
i\neq j}\left\vert a_{i}\right\vert e^{\sigma_0 \lambda_{i}} \text{, }%
\left( j=1,2,\ldots,k,\ldots\right),%\tag{3.4}
\end{equation*}%
 and only one of them is an
equality, say
\begin{equation}\label{4.200}
\left\vert a_{k}\right\vert e^{\lambda_{k}\sigma }=\sum_{i\geq 1\text{,
}i\neq k}\left\vert a_{i}\right\vert e^{\lambda_{i}\sigma }.
\end{equation}
Now,
from Lemma \ref{lem8}, we have that equation (\ref{4.200}), which is satisfied by $\sigma _{0}\in(\alpha,\beta)$, has $1$ or $2$ solutions in $(\alpha,\beta)$. This means that the equation $B_k(\sigma)=0$ has $1$ or $2$ solutions in $(\alpha,\beta)$, where
 \begin{equation*}\label{equalityj}
 B_k(\sigma):=\sum_{i\geq 1,i\neq k}\Frac{|a_i|}{|a_k|}e^{(\lambda_i-\lambda_k)\sigma}-1,\ \sigma\in(\alpha,\beta).
 \end{equation*}
Thus, since $B_k(\sigma)$ is continuous and convex in $(\alpha,\beta)$ (see also the proof of Lemma \ref{lem8}), we can assure the existence of
some $\varepsilon>0$ such that any
$\sigma $ in the interval $\left( \sigma _0-\varepsilon,\sigma
_{0}\right)\subset (\alpha,\beta) $ or $\left(\sigma
_{0}, \sigma _0+\varepsilon\right)\subset (\alpha,\beta) $ satisfies $B_k(\sigma)\geq 0$, which implies that $\sigma$ satisfies inequalities (\ref{PGp}). Then, by
Theorem \ref{point}, the interval $\left( \sigma _0-\varepsilon,\sigma
_{0}\right) $ or $\left(\sigma
_{0}, \sigma _0+\varepsilon\right) $ is in $R_f$, which is a contradiction because $\sigma_0$ is an isolated point in $R_f$.
\end{proof}

Under the conditions of the previous result, it is now clear that the set $R_f$ is the union of a denumerable amount of disjoint nondegenerate intervals.
In this respect, the following result concernes the gaps of the set $R_f$.

\begin{corollary}
Let $f(s)$ be an almost periodic function in a vertical strip $U=\{s\in\mathbb{C}:\alpha<\operatorname{Re}s<\beta\}$
whose Fourier exponents $\{\lambda_1,\lambda_2,\ldots,\lambda_k,\ldots\}$, with $k>2$, are $\mathbb{Q}$-linearly independent.
Then the gaps of $R_f$ are produced by those equations (\ref{4.1}) having two real solutions in $(\alpha,\beta)$.
%is countable. %El número de ceros es numerable
\end{corollary}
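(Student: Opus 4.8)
The plan is to translate everything into the convex functions introduced in the proof of Lemma~\ref{lem8} and then exploit a simple ``at most one dominant term'' observation. Since the exponents are $\mathbb{Q}$-linearly independent the series converges absolutely, so $S(\sigma):=\sum_{i\geq 1}|a_i|e^{\lambda_i\sigma}$ is finite for each $\sigma\in(\alpha,\beta)$. For each $j$ put $B_j(\sigma):=\sum_{i\neq j}\frac{|a_i|}{|a_j|}e^{(\lambda_i-\lambda_j)\sigma}-1$, which is convex on $(\alpha,\beta)$ by Lemma~\ref{lem8}. Inequality (\ref{PGp}) for the index $j$ is exactly $B_j(\sigma)\geq 0$, so by Theorem~\ref{point} one has $R_f=\{\sigma\in(\alpha,\beta):B_j(\sigma)\geq 0\ \forall j\}$, whence $R_f^c:=(\alpha,\beta)\setminus R_f=\bigcup_{j\geq 1}N_j$, where $N_j:=\{\sigma\in(\alpha,\beta):B_j(\sigma)<0\}$. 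By convexity each $N_j$ is an open subinterval of $(\alpha,\beta)$, and equation (\ref{4.1}) for the index $j$ is precisely $B_j(\sigma)=0$, the equation governing the endpoints of $N_j$.

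The crucial step is to observe that the intervals $N_j$ are pairwise disjoint. Indeed, $B_j(\sigma)<0$ is equivalent to $2|a_j|e^{\lambda_j\sigma}>S(\sigma)$, i.e. the $j$-th term exceeds half of the finite total $S(\sigma)$; since the nonnegative terms $|a_i|e^{\lambda_i\sigma}$ add up to $S(\sigma)$, at most one index can satisfy this for a given $\sigma$. Hence for each $\sigma\in R_f^c$ there is exactly one $j$ with $B_j(\sigma)<0$, and the $N_j$ are disjoint. I would then check that each nonempty $N_j$ is in fact a connected component of $R_f^c$: if $p\in(\alpha,\beta)$ is an endpoint of $N_j$ then $B_j(p)=0$, and $p$ cannot lie in any $N_{j'}$ (an open interval containing $p$ would meet $N_j$, contradicting disjointness), so $p\in R_f$; thus $N_j$ cannot be extended and is a maximal open interval of $R_f^c$.

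It remains to match components with the solution count of (\ref{4.1}). A gap of $R_f$ is a connected component $(c,d)$ of $R_f^c$ with $c,d\in R_f$, i.e. with both endpoints interior to the strip; by the previous paragraph such a gap equals some $N_j=(c,d)$, and then $c,d$ are two distinct zeros of $B_j$ in $(\alpha,\beta)$, so equation (\ref{4.1}) for this $j$ has exactly two real solutions there (at most two by Lemma~\ref{lem8}). Conversely, if (\ref{4.1}) for some index $j$ has two solutions $\sigma_1<\sigma_2$ in $(\alpha,\beta)$, then, $B_j$ being convex with two distinct roots and at most two zeros, it is strictly negative on $(\sigma_1,\sigma_2)$ and nonnegative outside; hence $N_j=(\sigma_1,\sigma_2)$ is a bounded component of $R_f^c$ with both endpoints in $R_f$, i.e. a genuine gap. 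Components of the form $(\alpha,\sigma_1)$ or $(\sigma_1,\beta)$ correspond to equations with a single solution and are not gaps. The main obstacle is the disjointness of the $N_j$ in the second paragraph; once that is secured, the bijection between gaps and two-solution equations follows from convexity together with Lemma~\ref{lem8}.
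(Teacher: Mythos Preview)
Your argument is correct. The key ingredients are the same as the paper's---Theorem~\ref{point} and the convexity of $B_j$ from Lemma~\ref{lem8}---but you organise them differently. The paper works pointwise: it takes a boundary point $\sigma_0$ of $R_f$, invokes Proposition~\ref{lem2} to say that exactly one of the equalities~(\ref{4.1}) holds there, and then argues by contradiction that this equation cannot have $\sigma_0$ as its unique solution. You instead give a global structural picture: you decompose $R_f^c$ as the union of the open sets $N_j=\{B_j<0\}$, observe directly that these are pairwise disjoint (your ``at most one term exceeds half the total'' remark is effectively the content of Proposition~\ref{lem2}, obtained without passing through that proposition), conclude that each nonempty $N_j$ is a full connected component of $R_f^c$, and then read off the dichotomy: a component is a gap (both endpoints in $(\alpha,\beta)$) iff $B_j$ has two zeros there. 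Your route is a bit longer but yields more: it gives the converse explicitly (each two-solution equation actually produces a gap) and makes transparent that distinct gaps come from distinct indices~$j$, neither of which the paper's proof spells out.
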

\begin{proof}
Let $\sigma_0\in (\alpha,\beta)$ be a boundary point of $R_f$. Then, by Proposition \ref{lem2}, $\sigma_0$ satisfies only one of equalities (\ref{4.1}), say
 \begin{equation}\label{4.2002}
\left\vert a_{k}\right\vert e^{\lambda_{k}\sigma }=\sum_{i\geq 1\text{,
}i\neq k}\left\vert a_{i}\right\vert e^{\lambda_{i}\sigma }.
\end{equation}
If we suppose that equation (\ref{4.2002}) has only the solution $\sigma_0$ in $(\alpha,\beta)$, it follows from theorems \ref{point} and \ref{theorem} that
$$\left\vert a_{k}\right\vert e^{\lambda_{k}\sigma }<\sum_{i\geq 1\text{,
}i\neq k}\left\vert a_{i}\right\vert e^{\lambda_{i}\sigma }\ \forall \sigma\in(\alpha,\beta)\setminus\{\sigma_0\}.
$$
Therefore, in virtue from continuity and Theorem \ref{point}, we can assure the existence of some $\varepsilon>0$ such that the interval $\left( \sigma _0-\varepsilon, \sigma _0+\varepsilon\right) $ is in $R_f$, which is a contradiction because $\sigma_0$ is a boundary point in $R_f$. Consequently, by Lemma \ref{lem8}, equation (\ref{4.2002}) has two solutions in $(\alpha,\beta)$, which means that the gaps of $R_f$ are only produced by those equations (\ref{4.1}) having two real solutions in $(\alpha,\beta)$. %Moreover, as there is a denumerable amount of equations, it is clear that the number of gaps of $R_f$ is countable and the corollary holds.
\end{proof}

\begin{remark}
Every result in this section has been formulated for the case where the set of Fourier exponents has at least three elements and all of them are $\mathbb{Q}$-linearly independent. However, these results are also certain for other cases such as that where one of the Fourier exponents is $\lambda_1=0$ and the set of the remaining Fourier exponents has at least two elements and all of them are $\mathbb{Q}$-linearly independent. Indeed, if $f(s)$ is an almost periodic function whose Fourier exponents $\{0,\lambda_2,\lambda_3,\ldots\}$ satisfy these new conditions, then the function $g(s)=f(s)e^{\mu s}$, where $\mu$ is chosen so that it is not in the $\mathbb{Q}$-vector space generated by $\{\lambda_2,\lambda_3,\ldots\}$, has the same set of zeros as that of $f(s)$ and its Fourier exponents satisfy the conditions of the results of this section.

Finally, note that for the case where the set of Fourier exponents of an almost periodic function $f$ has only two elements (and they are $\mathbb{Q}$-linearly independent) it is clear that the zeros of $f(s)$ are located on a vertical line, and consequently Theorem \ref{theorem} is not satisfied in this case.
\end{remark}

\bigskip

%\noindent \textbf{Acknowledgements.}

\end{document}